\newcommand{\Hom}{\operatorname{Hom}}
\DeclareMathOperator{\Mor}{Mor}
\newcommand{\Ext}{\operatorname{Ext}}
\newcommand{\Ker}{\operatorname{Ker}}
\newcommand{\Coker}{\operatorname{Coker}}
\newcommand{\cone}{\operatorname{cone}}
\newcommand{\C}{\mathbf{C}}
\newcommand{\Modr}[1]{\mathrm{Mod}\textrm{-}{#1}}
\newcommand{\Proj}{\mathrm{Proj}}
\DeclareMathOperator{\GProj}{GProj}
\DeclareMathOperator{\Z}{Z}
\renewcommand{\H}{\textrm{H}}
\theoremstyle{plain}
\newtheorem{theorem}{Theorem}[section]
\newtheorem{lemma}[theorem]{Lemma}
\newtheorem{corollary}[theorem]{Corollary}
\theoremstyle{definition}
\theoremstyle{remark}
\newtheorem{remark}[theorem]{Remark}
\title[Verdier quotients and Gorenstein-projective precovers]{Verdier quotients of homotopy categories of rings and Gorenstein-projective precovers}
\author{Manuel Cort\'es-Izurdiaga}
\address{Departamento de Matemática Aplicada, Universidad de Málaga, 29071, Málaga, Spain}
\thanks{Partially supported by the Spanish Government under grants PID2020-113206GB-I00, funded by MCIN/AEI/10.13039/501100011033, and by Junta de Andalucia under grant P20-00770}
\email{mizurdiaga@uma.es}
\subjclass[2010]{Primary 16E05, 16E65}
\keywords{Homotopy category, Gorenstein-projective precover, Verdier quotient, small Hom-sets, totally acyclic complex}
\begin{document}

\begin{abstract}
Let $R$ be a ring, $\Proj$ be the class of all projective right $R$-modules, $\mathcal K$ be the full subcategory of the homotopy category $\mathbf K(\Proj)$ whose class of objects consists of all totally acyclic complexes, and $\Mor_{\mathcal K}$ be the class of all morphisms in $\mathbf K(\Proj)$ whose cones belong to $\mathcal K$. We prove that if $\mathbf K(\Proj)$ has enough $\Mor_{\mathcal K}$-injective objects, then the Verdier quotient $\mathbf K(\Proj)/\mathcal K$ has small Hom-sets, and this last condition implies the existence of Gorenstein-projective precovers in $\Modr R$ and of totally acyclic precovers in $\mathbf C(\Modr R)$.
\end{abstract}

\maketitle

\section{Introduction}

The aim of relative homological algebra is to extend the homological techniques to those abelian categories $\mathcal A$ that do not have enough projective or injective objects. One way to do this is to substitute the class of projective or injective objects by a class of objects $\mathcal C$, and then compute left and right resolutions using the objects in $\mathcal C$, see \cite{EilenbergMoore} and \cite{EnochsJenda}. In order to get unique (up to homotopy) resolutions, we need the existence of $\mathcal C$-precovers or right $\mathcal C$-approximations, and of $\mathcal C$-preenvelopes or left $\mathcal C$-approximations (recall that a $\mathcal C$-precover of an object $A$ of $\mathcal A$ is a morphism $f:C \rightarrow A$ with $C \in \mathcal C$ and such that $\Hom_{\mathcal A}(C',f)$ is an epimorphism for each $C' \in \mathcal C$, and that $\mathcal C$-preenvelopes are defined dually). That is, it is possible to compute derived functors in the abelian category $\mathcal A$ without enough projective objects (resp. injectives objects) if one has a precovering (resp. preenveloping) class $\mathcal C$. From this point of view, the crucial point is to prove the existence of precovers and preenvelopes.

One of the most important ``relative homological algebras" is the so-called Gorenstein homological algebra. In this one, the projective, injective and flat modules are substituted by the Gorenstein-projective, Gorenstein-injective and Gorenstein-flat modules. The Gorenstein homological algebra is important, for instance, in commutative algebra, where there exist non-regular noetherian local rings (i. e., rings with infinite global dimension), but with finite Gorenstein global dimension, the Gorenstein rings (see \cite{Lars} for a treatment about Gorenstein dimensions over commutative rings). On the other hand, the Gorenstein categories introduced in \cite{EnochsEstradaGarciaRozas} might not have enough projectives, but they do have enough Gorenstein-projective objects, so that we can use Gorenstein-projective resolutions to compute derived functors.

If we look at the category $\Modr R$ of right modules over a not necessarily commutative ring $R$, there have been a lot of research since the earlies 90s trying to extend the Gorenstein homological algebra to $\Modr R$. As we mentioned before, the first (and key) step is to prove the existence of Gorenstein-projective and Gorenstein-flat precovers, and Gorenstein-injective preenvelopes. In \cite{YangLiang}, Yang and Liang showed that every module has a Gorenstein-flat precover, and in \cite{SarochStovicek}, \v{S}aroch and Stovicek proved the existence of Gorenstein-injective preenvelopes. However, it is not known if every module has a Gorenstein-projective precover.

The existence of Gorenstein-projective precovers is known for certain classes of rings. The first class is the so-called right Gorenstein regular rings \cite{EnochsCortesTorrecillas}, which are those rings for which the category of right modules is Gorenstein in the sense of \cite{EnochsEstradaGarciaRozas}. In this case, the existence of Gorenstein-projective precovers follows from the more general result \cite[Theorem 2.26]{EnochsEstradaGarciaRozas}. The second class consists of the left coherent and right $n$-perfect rings (those rings for which the flat right modules have finite projective dimension). The existence of Gorenstein-projective precovers over these rings was proved in \cite{EstradaIacobYeomans}, where Estrada, Iacob and Yeomans, using module-theoretic methods, extended previous results obtained with homotopy category methods by J\o rgensen in \cite{Jorgensen} and Murfet and Salarian in \cite{MurfetSalarian}. Finally, \v{S}aroch and the author have recently proven \cite[Corollaries 5.10 and 6.5]{CortesSaroch} that there exist Gorenstein-projective precovers over any right $\Sigma$-pure-injective ring (more generally, over a right $\lambda$-$\Sigma$-pure-injective ring, where $\lambda$ is an infinite regular cardinal), where a ring is $\Sigma$-pure-injective if every pure monomorphism $P \rightarrow Q$ with $P$ projective, splits.

The key fact in J{\o}rgensen's result about Gorenstein-projective precovers is that he assumes that the full subcategory $\mathcal K$ of $\mathbf K(\Proj)$ consisting of all totally acyclic complexes is coreflective. In this paper we continue with the investigation of the relationship between $\mathcal K$ and the existence of Gorenstein-projective precovers. We prove that if the Verdier quotient of $\mathbf K(\Proj)$ by $\mathcal K$ has small Hom-sets, then $\mathcal K$ is a coreflective subcategory of $\mathbf K(\Proj)$ (see Theorem \ref{t:coreflective}) and, by J{\o}rgensen's result, Gorenstein-projective precovers exist in $\Modr R$. Moreover, we prove that totally acyclic precovers exist in $\mathbf C(\Modr R)$ as well (Corollary \ref{c:TotallyAcyclicPrecovers}). 

These results lead us to look for conditions that imply that the Verdier quotient of $\mathbf K(\Proj)$ by $\mathcal K$ has small Hom-sets. In Section 4 we prove that if the class $\Mor_{\mathcal K}$ of those morphisms of $\mathbf K(\Proj)$ whose cones belong to $\mathcal K$ satisfies a certain generalization of the Baer's Lemma for injectivity, then the aforementioned Verdier quotient has small Hom-sets (see Theorem \ref{t:ExistenceInjective}).

\section{Preliminaries}

%\textcolor{red}{Let $\mathcal A$ be an additive category. In this paper, $\mathcal A$ will be a module category over a ring, the category of unbounded complexes over the ring or the homotopy category associated to some subcategory of $\mathbf C(\Modr R)$.}

An cardinal $\kappa$ is \textit{singular} if there exists a family of cardinals smaller than $\kappa$, $\{\lambda_\alpha\mid \alpha < \mu\}$, where $\mu$ is a cardinal smaller than $\kappa$, such that $\kappa = \sum_{\alpha<\kappa}\lambda_\alpha$ (cardinal sum). The cardinal $\kappa$ is called \textit{regular} if it is not singular.

Let $\mathcal A$ be an additive category and $\mathcal M$ a class of morphisms in $\mathcal A$. An object $E$ of $\mathcal A$ is called $\mathcal M$-injective if $\Hom_{\mathcal A}(i,E)$ is an epimorphism in the category of abelian groups for each $i \in \mathcal M$. We say that $\mathcal A$ has \textit{enough $\mathcal M$-injective objects} if for every object $A$ of $\mathcal A$, there exists a morphism $i:A \rightarrow E$ belonging to $\mathcal M$ and with $E$ being an $\mathcal M$-injective object. Dually they are defined the notions of \textit{$\mathcal M$-projective} objects and of the existence of \textit{enough $\mathcal M$-projective objects}.

We say that the class $\mathcal M$ satisfies the \textit{generalized Baer Lemma} if there exists a subset $\mathcal N$ of $\mathcal M$ such that any $\mathcal N$-injective object of $\mathcal A$ is $\mathcal M$-injective.

Given a full subcategory $\mathcal B$ of $\mathcal A$, a \textit{$\mathcal B$-precover} of an object $A \in \mathcal A$ is a morphism $f:B \rightarrow A$ with $B \in \mathcal B$ such that every $B'\in \mathcal B$ is $f$-projective. The $\mathcal B$-precover $f$ is a \textit{$\mathcal B$-coreflection} if $\Hom_{\mathcal A}(B',f)$ is an isomorphism for each $B' \in \mathcal B$. If every object of $\mathcal A$ has a $\mathcal B$-precover (resp. a $\mathcal B$-coreflection), then $\mathcal B$ is called \textit{precovering} (resp. \textit{coreflective}). Being coreflective is equivalent to the inclusion functor  $\mathcal B \rightarrow \mathcal A$ having a right adjoint \cite[Theorem IV.2]{Maclane}. Dually, they are defined the notions related with $\mathcal B$-preenvelopes and $\mathcal B$-reflections.

Suppose that $\mathcal A$ has direct limits. A \textit{transfinite sequence} in $\mathcal A$ is just a direct system, $(X_\alpha,u_{\alpha\beta} \mid \alpha < \beta < \kappa)$, indexed by an ordinal $\kappa$, such that the canonical morphism $\displaystyle \varinjlim_{\alpha < \beta}X_\alpha \rightarrow X_\beta$ is an isomorphism for each limit ordinal $\beta < \kappa$. The \textit{transfinite composition} of the sequence is the canonical morphism $u_0:X_0 \rightarrow \varinjlim X_\alpha$. Furthermore, if $\mathcal A$ has cokernels, the transfinite sequence $(X_\alpha,u_{\alpha\beta}\mid \alpha < \beta < \kappa)$ is called a \textit{$\mathcal B$-filtration} if $u_{\alpha\beta}$ is a monomorphism and $\Coker u_{\alpha\alpha+1} \in \mathcal B$ for each $\alpha < \beta < \kappa$.

Throughout this paper, $R$ denotes a not necessarily commutative ring with unit and all modules are right $R$-modules. The category of all right $R$-modules is denoted by $\Modr R$, and the full subcategory of all projective modules, by $\Proj$. Given any class of modules $\mathcal C$, we consider the left and right orthogonal classes of $\mathcal C$ with respect to $\Ext$, i. e., 
\begin{displaymath}
\mathcal C^\perp = \{M \in \Modr R \mid \Ext^1_R(C,M)=0 \, \forall C \in \mathcal C\}
\end{displaymath}
and
\begin{displaymath}
{^\perp}\mathcal C = \{M \in \Modr R \mid \Ext^1_R(M,C)=0 \, \forall C \in \mathcal C\}.
\end{displaymath}

We denote by $\mathbf C(\Modr R)$ the category whose class of objects consists of all cochain complexes $X$ of modules,
\begin{displaymath}
\begin{tikzcd}
\arrow{r} \cdots & X^{n-1} \arrow{r}{d_X^{n-1}} & X^n \arrow{r}{d_X^n} & X^{n+1} \arrow{r} & \cdots
\end{tikzcd}
\end{displaymath}
and whose morphisms are the cochain maps. Given such a complex $X$ and $n \in \mathbb Z$, we denote by $\Z^n(X)$ the $nth$-cycle of $X$ for each $n \in \mathbb Z$, i. e., $\Z^n(X)=\Ker d^n_X$, and by $\H^n(X)$ the \textit{$nth$-homology module}. The \textit{$nth$-shift} of $X$ is the complex $X[n]$ such that $X[n]^k=X^{k-n}$ and $d^k_{X[n]}=(-1)^nd_X^{k-n}$ for each $k \in \mathbb Z$. The \textit{cone} of a cochain map $f:X \rightarrow Y$ is the complex $\cone(f)$ such that $\cone(f)^n=X^{n+1} \oplus Y^n$ and $d_{\cone(f)}^n(x,y)=(-d_X^{n+1}(x),d^n_Y(y)-f^{n+1}(x))$ for each $x \in X^{n+1}$ and $y \in Y^n$. A short exact sequences of complexes,
\begin{displaymath}
\begin{tikzcd}
0 \arrow{r} & X \arrow{r}{f} & Y \arrow{r}{g} & Z \arrow{r} & 0
\end{tikzcd}
\end{displaymath}
is called \textit{semi-split} if $f^n$ is a split monomorphism (equivalently, $g^n$ is a split epimorphism) for each $n \in \mathbb Z$. In this case, we say that $f$ is a \textit{semi-split monomorphism} and that $g$ is a \textit{semi-split epimorphism}. Recall that $\mathbf C(\Modr R)$ with the class of all semi-split short exact sequences is an exact category in the sense of Quillen (see, for instance, \cite[Example 13.8]{Buhler}). Any chain homotopy $s$ between two cochain maps $f$ and $g$ is denoted $s: f \Rightarrow g$.

Given a module $M$ and an integer $n$, the \textit{disk on $M$ of degree $n$} is the complex $D_n(M)$ such that $D_n(M)^n=D_n(M)^{n+1}=M$, $D_n(M)^k=0$ if $k \notin\{n,n+1\}$, $d_{D_n(M)}^n=1_M$, and $d_{D_n(M)}^k=0$ if $k\neq n$. A complex $X$ is \textit{contractible} if it is a direct sum of disks (on certain modules).

The complex $X$ is called \textit{acyclic} if it is exact, and \textit{totally acyclic} if it is acyclic, $\Hom_R(-,\Proj)$-exact (that is, the complex $\Hom_R(X,P)$ is acyclic for every $P \in \Proj$) and $X^n \in \Proj$ for every $n \in \mathbb Z$. It is well known that an acyclic complex $X$ of projective modules is totally acyclic if and only if $\Z^n(X) \in {^\perp}\Proj$ for each $n \in \mathbb Z$. Finally, a module $M$ is \textit{Gorenstein-projective} if there exists a totally acyclic complex $X$ such that $\Z^0(X) \cong M$. We denote by $\GProj$ the full subcategory of $\Modr R$ of all Gorenstein-projective modules.

We denote by $\mathbf K(\Modr R)$ the homotopy category of $R$, that is, the category whose class of objects is $\mathbf C(\Modr R)$ and whose morphisms are homotopy equivalence classes of cochain maps. Given a cochain map $g$, we use the notation $\underline g$ for the corresponding morphism in $\mathbf K(\Modr R)$. The \textit{strict triangles} in $\mathbf K(\Modr R)$ are of the form
\begin{displaymath}
\begin{tikzcd}
X \arrow{r}{\underline f} & Y \arrow{r}{\underline u} & \cone(f) \arrow{r}{\underline v} & X[-1],
\end{tikzcd}
\end{displaymath}
where $f:X \rightarrow Y$ is a fixed cochain map, $u$ is the canonical injection $Y \rightarrow \cone(f)$, and $v$ is the projection $\cone(f) \rightarrow X[-1]$. Then, the category $\mathbf K(\Modr R)$ is a triangulated category with the class of \textit{exact triangles} consisting of those triangles isomorphic to a strict triangle.

By a \textit{cone} of a morphism $f:X \rightarrow Y$ in $\mathbf K(\Modr R)$ we mean an object $Z$ such that there exists an exact triangle of the form
\begin{equation}\label{e:Triangle}
\begin{tikzcd}
X \arrow{r}{f} & Y \arrow{r} & Z \arrow{r} & X[-1].\tag{!}
\end{tikzcd}
\end{equation}
A full subcategory $\mathcal D$ of $\mathbf K(\Modr R)$ is \textit{closed under cones} if every exact triangle as (\ref{e:Triangle}) with $X,Y \in \mathcal D$ satisfies that $Z\in \mathcal D$ as well. The subcategory $\mathcal D$ is called \textit{triangulated} if it is closed under isomorphisms, cones and $\mathcal D[-1]=\mathcal D$; \textit{thick} if it is triangulated and closed under direct summands, and \textit{localising}, if it is thick and closed under arbitrary direct sums. Notice that triangulated subcategories of $\mathbf K(\Modr R)$ are localization of full additive subcategories $\mathcal Y$ of $\mathbf C(\Modr R)$ (that is,  subcategories containing the zero object and closed under finite direct sums), which are closed under translations and the formation of mapping cones (see \cite[Corollary 10.2.5]{Weibel}). We denote by $\mathbf K(\Proj)$ the full subcategory of $\mathbf K(\Modr R)$ whose class of objects are all complexes of projective modules, and by $\mathcal K$ the full subcategory of $\mathbf K(\Proj)$ whose class of objects are the totally acyclic complexes.

Let $\mathcal T$ be a triangulated subcategory of $\mathbf K(\Modr R)$ and $\mathcal D$ a thick subcategory of $\mathcal T$. Following \cite[Definition 1.5.3]{Neeman10}, we denote by $\Mor_\mathcal D$ the class of morphisms $f:X \rightarrow Y$ in $\mathcal T$ for which there exists a triangle
\begin{displaymath}
\begin{tikzcd}
X \arrow{r}{f} & Y \arrow{r} & Z \arrow{r} & X[-1]
\end{tikzcd}
\end{displaymath}
with $Z \in \mathcal D$.
The Verdier quotient of $\mathcal T$ by $\mathcal D$ is the category $\mathcal T/\mathcal D$ whose class of objects coincides with the class of objects of $\mathcal T$, and whose morphisms between two objects $X$ and $Y$ of $\mathcal T$ are equivalence classes of triples $(Z,f,g)$ such that $f:Z \rightarrow X$ is a morphism in $\Mor_{\mathcal D}$ and $g:Z \rightarrow Y$ is a morphism in $\mathcal T$. The equivalence relation is defined in the following way: $(Z,f,g) \sim (Z',f',g')$ if and only if there exists a triple $(Z'',f'',g'')$ and morphisms $u:Z'' \rightarrow Z$ and $v:Z''\rightarrow Z'$ which make the diagram
\begin{displaymath}
\begin{tikzcd}
 & Z' \arrow{dl}{f'} \arrow{dr}{g'} & \\
X & Z'' \arrow{l}{f''} \arrow{u}{v} \arrow{r}{g''} \arrow{d}{u} & Y\\
 & Z \arrow{ul}{f} \arrow{ur}{g} & \\
\end{tikzcd}
\end{displaymath}
commutative. In general, the class of morphisms between two objects of the Verdier quotient of $\mathcal T$ by $\mathcal D$ is not a small set, just a proper class.

\section{Verdier quotients and precovers}

In this section, we establish the relationship between the Verdier quotient\\ $\mathbf K(\Proj)/\mathcal K$ and the existence of Gorenstein-projective precovers in $\Modr R$ and of totally acyclic precovers in $\mathbf C(\Modr R)$. First, we give the following lemma, which will be used several times in the sequel. It is essentially proved in \cite[Construction 2.4]{Jorgensen} for the class of all totally acyclic complexes. We will use the following more general form.

\begin{lemma}\label{l:Semisplit_epimorphism}
Let $\mathcal X \subseteq \mathcal Y$ be full subcategories of $\C(\Modr R)$. Suppose that $\mathcal X$ is closed under countable direct sums and that, for every $Y \in \mathcal Y$ and $n \in \mathbb Z$, the disk on $Y^n$ of degree $n$, $D_n(Y^n)$, belongs to $\mathcal X$. Let $f:X \rightarrow Y$ be a morphism in $\mathcal Y$. Then:

\begin{enumerate}
\item If $X \in \mathcal X$, there exists a contractible complex $Z \in \mathcal X$ and a semi-split epimorphism $g:X \oplus Z \rightarrow Y$ in $\mathcal Y$ such that and $\underline {gh} = \underline f$, where $\underline h:X \rightarrow X \oplus Z$ is the isomorphism in $\mathbf K(\Modr R)$ induced by the inclusion $X \rightarrow X \oplus Z$ in $\mathbf C(\Modr R)$.

\item If $Y \in \mathcal X$, there exists a contractible complex $Z \in \mathcal X$ and a semi-split monomorphism $g:X \rightarrow Y \oplus Z$  in $\mathcal Y$, such that and $\underline {hg} = \underline f$, where $\underline h:Y \oplus Z \rightarrow Y$ is the isomorphism in $\mathbf K(\Modr R)$ induced by the projection $Y \oplus Z \rightarrow Y$ in $\mathbf C(\Modr R)$.
\end{enumerate}
\end{lemma}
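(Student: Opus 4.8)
The plan is to prove both statements by an explicit ``disk'' construction: in each case I will manufacture a contractible complex $Z$ lying in $\X$ together with a canonical degreewise-split map relating $Z$ to $Y$ (resp. to $X$), and then combine that map with $f$. The two items are formally dual, so I will carry out (1) in detail and obtain (2) by the analogous dual construction.

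For (1), set $Z=\bigoplus_{n\in\mathbb Z}D_n(Y^n)$. This is a direct sum of disks, hence contractible; each summand $D_n(Y^n)$ lies in $\X$ by hypothesis, and since the index set is countable and $\X$ is closed under countable direct sums, $Z\in\X$. In degree $k$ one has $Z^k=Y^k\oplus Y^{k-1}$, where the first summand is the lower copy of $D_k(Y^k)$ and the second is the upper copy of $D_{k-1}(Y^{k-1})$, and the differential is $d_Z^k(u,v)=(0,u)$. I then define $p\colon Z\to Y$ by $p^k(u,v)=u+d_Y^{k-1}(v)$ and check that this is a cochain map (the cocycle identity reduces to $d_Y^kd_Y^{k-1}=0$) and a degreewise-split epimorphism (a splitting is $u\mapsto(u,0)$). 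Now put $g^k(x,z)=f^k(x)+p^k(z)$, so that $g\colon X\oplus Z\to Y$ is a cochain map which is a semi-split epimorphism because its restriction $p$ to $Z$ already is. Since $X,Z\in\Y$ and $\Y$ is closed under finite direct sums, $X\oplus Z\in\Y$. Finally $Z$ is contractible, so the inclusion $h\colon X\to X\oplus Z$ becomes an isomorphism $\underline h$ in $\mathbf K(\Modr R)$ (its homotopy inverse is the projection), and $gh=f$ already at the level of cochain maps, whence $\underline{gh}=\underline f$.

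For (2), the dual construction uses $Z=\bigoplus_{n\in\mathbb Z}D_{n-1}(X^n)$, again a contractible direct sum of disks; I will note that each $D_{n-1}(X^n)$ lies in $\X$ by applying the disk hypothesis to (a shift of) $X\in\Y$, so that $Z\in\X$. Here $Z^k=X^{k+1}\oplus X^k$ with differential $d_Z^k(p,q)=(0,p)$, and the canonical degreewise-split monomorphism $j\colon X\to Z$ is $j^k(x)=(d_X^k(x),x)$, with retraction given by the projection onto the second coordinate; one checks that $j$ is a cochain map exactly as before. Setting $g^k(x)=(f^k(x),j^k(x))$ gives a cochain map $g\colon X\to Y\oplus Z$ which is a semi-split monomorphism because $j$ is, the projection $h\colon Y\oplus Z\to Y$ is an isomorphism in $\mathbf K(\Modr R)$ since $Z$ is contractible, and $hg=f$ on the nose, so $\underline{hg}=\underline f$.

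None of the steps is deep; the real work is the degree bookkeeping needed to see which disk contributes to each degree and to verify that the canonical maps $p$ and $j$ are cochain maps and degreewise split. The one point that genuinely uses the hypotheses in their precise form is the membership $Z\in\X$: for (1) it is immediate from ``$D_n(Y^n)\in\X$'', whereas for (2) the required disks are $D_{n-1}(X^n)$, so one must feed the hypothesis the shifted complex $X[-1]$ (legitimate in the shift-closed settings where the lemma is applied) to conclude $D_{n-1}(X^n)=D_{n-1}\big((X[-1])^{n-1}\big)\in\X$. The homotopy assertions that $\underline h$ is invertible and $\underline{gh}=\underline f$ (resp. $\underline{hg}=\underline f$) are then automatic from the contractibility of $Z$ and from the fact that the relevant composites already equal $f$ as cochain maps.
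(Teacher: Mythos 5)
Your proof of part (1) is exactly the paper's: the paper defines, for each $n\in\mathbb Z$, the cochain map $f_n\colon D_n(Y^n)\to Y$ with $f_n^n=1_{Y^n}$ and $f_n^{n+1}=d_Y^n$, and takes $g$ to be the direct sum of $f$ and the $f_n$'s on $X\oplus\bigl(\bigoplus_{n\in\mathbb Z}D_n(Y^n)\bigr)$ --- precisely your $p$ and your $g$ --- while part (2) is disposed of there with the single word ``dual'', and your explicit construction is that dual. The one substantive point you raise is also correct: the dual construction needs the disks $D_{n-1}(X^n)\cong D_n(X^n)[-1]$, which the hypothesis as literally stated does not supply, so part (2) tacitly requires the shift-closure you invoke; this is an imprecision in the paper's own statement rather than in your argument, and it is harmless because in every application of (2) one has $\X=\Y=\C(\Proj)$ or $\C(\Modr R)$, which are closed under shifts.
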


\begin{proof}
We proof (1), since (2) is dual. For each $n \in \mathbb Z$, there exists a cochain map $f_n:D_n(Y^n) \rightarrow Y$ given by $f_n^k=0$ if $k \notin \{n,n+1\}$, $f_n^n=1_{Y_n}$ and $f_n^{n+1}=d_{Y}^n$. Then the morphism $g:X \oplus(\oplus_{n \in \mathbb Z}D_n(Y^n)) \rightarrow Y$ given by the direct sum of the $f_n's$ and $f$ is the desired morphism.
\end{proof}

The key property of $\mathcal K$ regarding the Verdier quotient is that it is a localising subcategory of $\mathbf K(\Modr R)$.

\begin{lemma}\label{p:Localising}
The subcategory $\mathcal K$ of $\mathbf K(\Modr R)$, whose class of objects consists of all totally acyclic complexes, is localising.
\end{lemma}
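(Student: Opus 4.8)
The plan is to verify the three defining properties of a localising subcategory directly from the definition: that $\mathcal K$ is (i) triangulated (closed under isomorphisms, cones, and shifts), (ii) thick (closed under direct summands), and (iii) closed under arbitrary direct sums. The characterisation recalled in the preliminaries — that an acyclic complex $X$ of projectives is totally acyclic precisely when $\Z^n(X) \in {^\perp}\Proj$ for every $n \in \mathbb Z$ — will be the workhorse, since it replaces the somewhat awkward $\Hom_R(-,\Proj)$-exactness condition by a condition on cycles that behaves well under the relevant operations.

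First I would record that $\mathcal K$ is closed under isomorphisms in $\mathbf K(\Modr R)$ and under the shift $X \mapsto X[-1]$; the latter is immediate because shifting permutes the degrees and sends $\Z^n(X)$ to $\Z^{n+1}(X[-1])$ up to sign, so total acyclicity is preserved, and projectivity of the terms is obviously preserved. For closure under arbitrary direct sums, I would use that a direct sum of acyclic complexes is acyclic, that a direct sum of projectives is projective, and that $\Z^n(\bigoplus_i X_i) \cong \bigoplus_i \Z^n(X_i)$ since taking kernels commutes with direct sums; finally ${^\perp}\Proj$ is closed under arbitrary direct sums because $\Ext^1_R(-,P)$ sends direct sums to products and a product of zero groups is zero. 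The same two ingredients (kernels commute with finite biproducts, and $\Ext^1$ detects summands) give closure under direct summands once one knows a summand of an acyclic complex of projectives is again such a complex.

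The main step, and the one I expect to be the genuine obstacle, is closure under cones. Given a morphism $\underline f : X \to Y$ in $\mathbf K(\Modr R)$ with $X, Y \in \mathcal K$, I would replace it by a cochain representative $f$ and work with the mapping cone $\cone(f)$, which sits in a degreewise-split short exact sequence $0 \to Y \to \cone(f) \to X[-1] \to 0$ of complexes. From this sequence I get that $\cone(f)^n = X^{n+1} \oplus Y^n$ is projective and, via the long exact homology sequence, that $\cone(f)$ is acyclic since $X$ and $Y$ are. The delicate point is verifying $\Z^n(\cone(f)) \in {^\perp}\Proj$ for all $n$: the cycle modules of the cone do not decompose as direct sums of the cycles of $X[-1]$ and $Y$, so I cannot argue termwise. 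Instead I would extract, for each $n$, a short exact sequence relating $\Z^n(\cone(f))$ to $\Z^{n+1}(X)$ and $\Z^n(Y)$ (using the degreewise-split structure to identify the relevant kernels), and then apply the long exact sequence of $\Ext^*_R(-,P)$ for $P \in \Proj$: since both $\Z^{n+1}(X)$ and $\Z^n(Y)$ lie in ${^\perp}\Proj$, the outer terms $\Ext^1_R$ vanish and one must check that the connecting map does not obstruct the vanishing of $\Ext^1_R(\Z^n(\cone(f)),P)$. Here I would use that $X$ and $Y$ being totally acyclic makes \emph{all} their syzygies, not merely single cycles, orthogonal to $\Proj$, so the higher $\Ext$ terms that feed the connecting map also vanish and the argument closes.

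Alternatively, and perhaps more cleanly, I would observe that it suffices to know $\mathcal K$ is a triangulated subcategory of the triangulated category $\mathbf K(\Proj)$, so closure under cones can be phrased as: $\mathcal K$ is closed under extensions in the sense of exact triangles. One can then check total acyclicity by noting that the class of complexes satisfying the three conditions (acyclic, termwise projective, $\Hom_R(-,\Proj)$-exact) is visibly closed under the operations appearing in a strict triangle, since $\Hom_R(\cone(f),P)$ fits in its own short exact sequence of complexes $0 \to \Hom_R(X[-1],P) \to \Hom_R(\cone(f),P) \to \Hom_R(Y,P) \to 0$ whose outer terms are acyclic, forcing the middle to be acyclic as well. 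This routes around the cycle computation entirely and is likely the shorter path; I would present closure under cones this way and reserve the cycle characterisation for the summand and direct-sum arguments, where it is most efficient.
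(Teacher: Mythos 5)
Your outline identifies the right three properties, and your treatment of shifts, direct sums, and cones is essentially sound; but the direct-summand step has a genuine gap. You argue at the level of chain complexes (``kernels commute with finite biproducts''), i.e.\ you show that if a complex decomposes in $\mathbf C(\Modr R)$ as $X \oplus X'$ and is totally acyclic, then so is $X$. Thickness, however, concerns direct summands in the triangulated category $\mathbf K(\Modr R)$: there one is given morphisms $\underline f \colon X \to E$ and $\underline g \colon E \to X$ with $\underline g\,\underline f = 1_X$ holding only up to homotopy, and $X$ need not be a degreewise summand of $E$; in particular $\Z^n(X)$ is not a summand of $\Z^n(E)$, and $\Z^n$ is not even a functor on $\mathbf K(\Modr R)$, so the cycle criterion cannot be applied to a homotopy-summand. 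The paper's proof is built precisely to avoid this: it evaluates only homotopy-invariant functors on the retract diagram --- $\H^n(-)$ to get acyclicity, and $\Hom_{\mathbf K(\Modr R)}(-,S_n(Q)) \cong \H^n(\Hom_R(-,Q))$ to get $\Hom_R(-,\Proj)$-exactness --- so that a summand in $\mathbf K(\Modr R)$ yields a summand of zero abelian groups. Your argument can be repaired: if ``direct summand'' is read as ``$X \oplus X' \cong E$ in $\mathbf K(\Modr R)$ for some complement $X'$,'' then the chain-level sum $X \oplus X'$ is isomorphic in the homotopy category to $E$, hence totally acyclic by closure under isomorphisms (which you record but do not justify, and which itself requires exactly these homotopy-invariance observations), and then your chain-level argument applies. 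If instead ``summand'' means retract, as in the paper's proof, you additionally need idempotents to split in $\mathbf K(\Proj)$, a nontrivial input the paper's functorial argument never requires. Either way, the bridge from chain-level summands to homotopy-category summands is missing from your write-up.

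On cones: your first route is exactly the paper's (the short exact sequence $0 \to \Z^n(Y) \to \Z^n(\cone(f)) \to \Z^{n+1}(X) \to 0$ together with closure of ${^\perp}\Proj$ under extensions), except that your worry about the connecting map is unfounded --- exactness of $\Ext^1_R(\Z^{n+1}(X),P) \to \Ext^1_R(\Z^n(\cone(f)),P) \to \Ext^1_R(\Z^n(Y),P)$ with vanishing outer terms already forces the middle term to vanish, and no higher $\Ext$ groups enter. Your second route --- applying $\Hom_R(-,P)$ to the degreewise split sequence $0 \to Y \to \cone(f) \to X[-1] \to 0$ and using the long exact homology sequence twice --- is correct and genuinely cleaner than the paper's argument: it bypasses the cycle criterion and the pullback diagram entirely, and I would recommend presenting cones that way.
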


\begin{proof}
Notice that trivially $\mathcal K[-1] = \mathcal K$. Moreover, $\mathcal K$ is closed under direct sums, since direct sums in $\mathbf K(\Proj)$ are computed in $\C(\Modr R)$ and the class of totally acyclic complexes is closed under direct sums in $\C(\Modr R)$.

Let us show that $\mathcal K$ is closed under direct summands. Let $E \in \mathcal K$ and $X \in \mathbf K(\Proj)$ be a direct summand of $E$. Then there exist morphisms in $\mathbf K(\Proj)$, $\underline f:X \rightarrow E$ and $\underline g:E \rightarrow X$, such that $\underline g\underline f=1_X$ in $\mathbf K(\Proj)$. In particular, if $f_*$ and $g_*$ are the induced morphisms between the homology modules, $g_*^nf_*^n=1_{\H^n(X)}$, which means that $\H^n(X)$ is isomorphic to a direct summand of $\H^n(E)$ for each $n \in \mathbb Z$. This implies that $X$ is acyclic.

In order to prove that $X$ is totally acyclic, let $Q$ be a projective module and let us see that $\H^n (\Hom_R(X,Q))=0$ for each $n \in \mathbb Z$. Notice that $0=\H^n (\Hom_R(E,Q)) \cong \Hom_{\mathbf K(\Modr R)}(E,S_n(Q))$, where $S_n(Q)$ is the complex with $Q$ in degree $n$ and zeros elsewhere. Since $X$ is a direct summand of $E$ in $\mathbf K(\Modr R)$, $\Hom_{\mathbf K(\Modr R)}(X,S_n(Q))=0$ as well, which implies, using the same isomorphism as above, that $\H^n(\Hom_R(X,Q))=0$ for each $n \in \mathbb Z$. Then $X$ is totally acyclic.

In order to show that $\mathcal K$ is closed under cones, let us prove that the class of totally acyclic complexes is closed under cones in $\mathbf C(\Modr R)$. Let $f:X \rightarrow Y$ be a morphism between totally acyclic complexes. Then $\cone(f)^n \in \Proj$ for each $n \in \mathbb Z$, and $\cone(f)$ is acyclic by \cite[Corollary 1.5.4]{Weibel}, since $f$ is a quasi-isomorphism. In order to show that $\Z^n(\cone(f)) \in {^\perp}\Proj$, notice that 
\begin{displaymath}
\Z^n(\cone(f)) = \{(x,y)\in \Z^{n+1}(X) \oplus Y^n \mid d_Y^n(y)=f^{n+1}(x)\},
\end{displaymath}
so that there exists a commutative diagram with exact rows,
\begin{displaymath}
\begin{tikzcd}
0 \arrow{r} & \Z^n(Y) \arrow{r} \arrow[equal]{d} & \Z^n(\cone(f)) \arrow{r} \arrow{d} & \Z^{n+1}(X) \arrow{d}{f^{n+1}} \arrow{r} & 0\\
0 \arrow{r} & \Z^n(Y) \arrow{r} & Y^n \arrow{r}{d_Y^n} & \Z^{n+1}(Y) \arrow{r} & 0
\end{tikzcd}
\end{displaymath}
which follows from the fact that the right hand square is a pullback (see \cite[10.3]{Wisbauer}). Then $\Z^n(\cone(f)) \in {^\perp}\Proj$, since this class is closed under extensions.

Finally, using this and the fact that $\mathcal K$ is closed under isomorphisms, it follows that $\mathcal K$ is closed under cones, which concludes the proof.
\end{proof}

In \cite[Theorem 1.10]{Jorgensen}, J\o rgensen proved that $\mathcal K$ is coreflective in $\mathbf K(\Proj)$ provided that (1) $R$ is a noetherian commutative ring with a dualizing complex or (2) $R$ is a left coherent and right noetherian $K$-algebra over the field $K$ for which there exists a left noetherian $K$-algebra $B$ and a dualizing complex ${_B}D_A$. In p. 66 J\o rgensen asked whether this is true for any ring $R$.                                                                                                                                                                     Our next result gives an affirmative answer to this question when the Verdier quotient $\mathbf K(\Proj)/\mathcal K$ has small Hom-sets.

\begin{theorem}\label{t:coreflective}
Suppose that the Verdier quotient $\mathbf K(\Proj)/\mathcal K$ has small Hom-sets. Then, $\mathcal K$ is a coreflective subcategory of $\mathbf K(\Modr R)$.
\end{theorem}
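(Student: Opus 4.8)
The plan is to produce a right adjoint to the inclusion $\K \hookrightarrow \mathbf K(\Modr R)$, since by \cite[Theorem IV.2]{Maclane} this is exactly coreflectivity. I would factor the inclusion as $\K \hookrightarrow \mathbf K(\Proj) \hookrightarrow \mathbf K(\Modr R)$, build a right adjoint at each stage, and take the composite. Both stages rely on the same external input: for an arbitrary ring $R$ the triangulated category $\mathbf K(\Proj)$ is well generated (a theorem of Neeman on the homotopy category of flat and projective modules) and therefore satisfies Brown representability, so that every cohomological functor $\mathbf K(\Proj)\op \to \Ab$ sending coproducts to products is representable. Note that $\mathbf K(\Proj)$ and $\mathbf K(\Modr R)$ themselves have small Hom-sets; it is only the Verdier quotient that is a priori problematic.

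For the outer inclusion $\mathbf K(\Proj) \hookrightarrow \mathbf K(\Modr R)$, I would fix $X \in \mathbf K(\Modr R)$ and consider the restriction to $\mathbf K(\Proj)$ of $\Hom_{\mathbf K(\Modr R)}(-,X)$. This is cohomological, set-valued, and, since coproducts in $\mathbf K(\Proj)$ are computed degreewise and hence agree with those in $\mathbf K(\Modr R)$, it carries coproducts to products. Brown representability then yields $\Phi(X) \in \mathbf K(\Proj)$ together with a natural isomorphism $\Hom_{\mathbf K(\Proj)}(P,\Phi(X)) \cong \Hom_{\mathbf K(\Modr R)}(P,X)$ for $P \in \mathbf K(\Proj)$; that is, $\Phi$ is a right adjoint to the outer inclusion. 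This stage uses neither $\K$ nor the small-Hom-sets hypothesis.

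The heart of the matter is the inner inclusion $\K \hookrightarrow \mathbf K(\Proj)$. Since $\K$ is localising (Lemma \ref{p:Localising}), the Verdier quotient functor $Q \colon \mathbf K(\Proj) \to \mathbf K(\Proj)/\K$ preserves coproducts and has $\Ker Q = \K$. For each object $d$ of $\mathbf K(\Proj)/\K$ I would study $F_d := \Hom_{\mathbf K(\Proj)/\K}(Q(-),d)$, which is cohomological and sends coproducts to products; this is precisely where the hypothesis is used, for the assumption that $\mathbf K(\Proj)/\K$ has small Hom-sets is exactly what makes $F_d$ take values in $\Ab$ rather than in proper classes, so that Brown representability applies and provides $R(d) \in \mathbf K(\Proj)$ representing $F_d$. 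The resulting functor $R$ is right adjoint to $Q$. Standard localization theory \cite{Neeman10} then converts this into a right adjoint of the inclusion: the counit $QR \to \mathrm{id}$ is invertible, so for each $P$ the unit triangle $K_P \to P \to RQ(P) \to K_P[-1]$ has $RQ(P) \in \K^\perp$ (because $\Hom_{\mathbf K(\Proj)}(K,RQP) \cong \Hom_{\mathbf K(\Proj)/\K}(QK,QP)=0$ for every $K \in \K$) and $K_P \in \Ker Q = \K$. Thus $K_P \to P$ is the $\K$-coreflection, giving a right adjoint $T$ to $\K \hookrightarrow \mathbf K(\Proj)$.

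Composing, $T \circ \Phi$ is right adjoint to $\K \hookrightarrow \mathbf K(\Modr R)$: for $K \in \K$ and $X \in \mathbf K(\Modr R)$ the two adjunctions combine to $\Hom_{\mathbf K(\Modr R)}(K,X) \cong \Hom_{\mathbf K(\Proj)}(K,\Phi X) \cong \Hom_{\K}(K,T\Phi X)$, which is the coreflectivity asserted. The main obstacle I anticipate is not the formal manipulation of adjoints but securing the two facts on which everything turns: that $\mathbf K(\Proj)$ satisfies Brown representability for a completely unrestricted ring $R$, and the careful bookkeeping showing that small Hom-sets is exactly the set-theoretic condition legitimizing $F_d$ as an $\Ab$-valued functor. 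The remaining points — the cohomological property, preservation of coproducts, $\Ker Q = \K$, and the passage between right adjoints of $Q$ and of the inclusion — are routine triangulated-category arguments.
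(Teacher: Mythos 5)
Your proposal is correct and takes essentially the same route as the paper: Brown representability for $\mathbf K(\Proj)$ (Neeman), the small-Hom-sets hypothesis to obtain a right adjoint to the quotient functor $\mathbf K(\Proj) \to \mathbf K(\Proj)/\mathcal K$ (a Bousfield localisation), its conversion into coreflectivity of $\mathcal K$ in $\mathbf K(\Proj)$, and composition with the coreflective inclusion $\mathbf K(\Proj) \hookrightarrow \mathbf K(\Modr R)$. The only difference is that you unpack into explicit representability and unit-triangle arguments the results the paper invokes by citation (Neeman's Example 8.4.5, Proposition 9.1.18, and Theorem 3.2 of the cited paper on the homotopy category of projectives).
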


\begin{proof}
First, we see that $\mathcal K$ is a coreflective subcategory of $\mathbf K(\Proj)$. By \cite[Theorem 1.1]{Neeman08}, $\mathbf K(\Proj)$ satisfies Brown representability. By \cite[Example 8.4.5]{Neeman01} and Proposition \ref{p:Localising}, a Bousfield localisation functor exists for the pair $\mathcal K \subseteq \mathbf K(\Proj)$. But, by \cite[Proposition 9.1.18]{Neeman01}, this is equivalent to $\mathcal K$ being coreflective in $\mathbf K(\Proj)$.

On the other hand, as argued in \cite[Theorem 3.2]{Neeman10}, $\mathbf K(\Proj)$ is a coreflective subcategory of $\mathbf K(\Modr R)$. Then, $\mathcal K$ is a coreflective subcategory of $\mathbf K(\Modr R)$ as well.
\end{proof}

Using \cite[Corollary 2.3]{Jorgensen} (see \cite[Setup 2.1]{Jorgensen}), we get the existence of Gorenstein-projective precovers when the Verdier quotient $\mathbf K(\Proj)/\mathcal K$ has small Hom-sets:

\begin{corollary}\label{c:GorensteinPrecovers}
If the Verdier quotient $\mathbf K(\Proj)/\mathcal K$ has small Hom-sets, then every module has a Gorenstein-projective precover.
\end{corollary}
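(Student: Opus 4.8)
The plan is to obtain this as an immediate consequence of Theorem \ref{t:coreflective}, combined with Jørgensen's construction of Gorenstein-projective precovers out of a coreflection onto the totally acyclic complexes. The essential mathematical work has already been done: the hard step, namely passing from the small-Hom-set hypothesis to coreflectivity, is precisely the content of Theorem \ref{t:coreflective}. What remains is to feed that coreflectivity into the cited machinery.

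First I would isolate the intermediate conclusion inside the proof of Theorem \ref{t:coreflective}. Under the standing hypothesis that $\mathbf K(\Proj)/\mathcal K$ has small Hom-sets, the first paragraph of that proof shows that $\mathcal K$ is coreflective \emph{in $\mathbf K(\Proj)$} (via Brown representability for $\mathbf K(\Proj)$ together with the existence of a Bousfield localisation functor for the localising pair $\mathcal K \subseteq \mathbf K(\Proj)$ supplied by Lemma \ref{p:Localising}). It is this coreflectivity inside $\mathbf K(\Proj)$, rather than inside $\mathbf K(\Modr R)$, that matches the framework of \cite[Setup 2.1]{Jorgensen}, where one assumes exactly that the inclusion of the totally acyclic complexes into $\mathbf K(\Proj)$ admits a right adjoint.

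Second, I would invoke \cite[Corollary 2.3]{Jorgensen}, which turns this coreflectivity into the desired precovers. The mechanism is that, given a module $M$, one associates to it a suitable complex of projectives, applies the coreflection to produce a totally acyclic complex together with its universal morphism, and then extracts an appropriate cycle — which is Gorenstein-projective by definition — equipped with an induced map to $M$; the universal property of the coreflection is what forces this map to be a $\GProj$-precover.

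The only point that genuinely needs verification is that the coreflectivity furnished by Theorem \ref{t:coreflective} is coreflectivity in $\mathbf K(\Proj)$, which is what Jørgensen's setup demands, and this is exactly the intermediate claim established along the way in the proof of that theorem. I therefore expect no real obstacle: once Theorem \ref{t:coreflective} is in hand, the corollary is a formal application of \cite[Corollary 2.3]{Jorgensen}.
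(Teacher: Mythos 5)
Your proposal is correct and is essentially the paper's own argument: the paper likewise deduces the corollary by combining Theorem \ref{t:coreflective} with \cite[Corollary 2.3]{Jorgensen} (under \cite[Setup 2.1]{Jorgensen}). Your extra care about the coreflectivity holding in $\mathbf K(\Proj)$ (the intermediate step of the proof of Theorem \ref{t:coreflective}), which is what J{\o}rgensen's setup requires, is exactly the right point to check and causes no difficulty.
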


%\begin{remark}
%Recall that a Gorenstein projective precover $f:G \rightarrow M$ of a module $M$ is \textit{special} if the kernel of $f$ belongs to ${^\perp}\GProj$. The author does not know if every module has an special Gorenstein projective precover. Related with this problem \v{S}aroch and the author have proven in a recent paper \cite{IzurdiagaSaroch} that the pair\\ $(\GProj,\GProj^\perp)$ is a cotorsion pair, i. e., $\GProj={^\perp}(\GProj^\perp)$, which is complete when $R$ is $\Sigma$-pure-injective. This completeness means precisely that $\GProj$ is special precovering.
%\end{remark}

Now we establish the relationship between the Verdier quotient $\mathbf K(\Proj)/\mathcal K$ and the existence of precovers by totally acyclic complexes in $\mathbf C(\Proj)$. Let us discuss first the relationship between precovering and coreflective subcategories. It is a classical result that if $\mathcal B$ is a coreflective subcategory of a category $\mathcal A$, then $\mathcal B$ is precovering. In particular, if $\mathcal K$ is coreflective in $\mathbf K(\Proj)$, then $\mathcal K$ is precovering in $\mathbf K(\Modr R)$. But, is the class of all totally acyclic complexes precovering in $\mathbf C(\Modr R)$? The answer to this question is essentially given by the following result, which is inspired in \cite[Construction 2.4 and Lemma 2.6]{Jorgensen}:

\begin{theorem}\label{t:Precovers}
Let $\mathcal X \subseteq \mathcal Y$ be full subcategories of $\C(\Modr R)$. Denote by $\mathcal L$ and $\mathcal D$ the full subcategories of $\mathbf K(\Modr R)$ whose class of objects are $\mathcal X$ and $\mathcal Y$, respectively. Suppose that $\mathcal X$ is closed under countable direct sums, $\mathcal X[-1] \subseteq \mathcal X$ and that, for each $Y \in \mathcal Y$ and $n \in \mathbb Z$, $D_n(Y^n)$ belongs to $\mathcal X$. The following assertions are equivalent:
\begin{enumerate}
\item $\mathcal L$ is a coreflective subcategory of $\mathcal D$.

\item For each complex $C \in \mathcal Y$ there exists a semi-split exact sequence in $\mathbf C(\Modr R)$,
\begin{displaymath}
\begin{tikzcd}
0 \arrow{r} & Z \arrow{r} & X \arrow{r} & C \arrow{r} & 0,
\end{tikzcd}
\end{displaymath}
such that $X \in \mathcal X$ and $\Hom_{\mathbf{K}(\Modr R)}(\widehat X,Z)=0$ for each $\widehat X \in \mathcal X$.
\end{enumerate}

Moreover, when both assertions hold, the class $\mathcal X$ is precovering in $\mathcal Y$.
\end{theorem}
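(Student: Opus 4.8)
The plan is to route both implications through the distinguished triangle attached to a semi-split exact sequence, and then to upgrade from the homotopy category to honest cochain maps at the very end. Recall that a semi-split exact sequence $0\to Z\xrightarrow{i} X\xrightarrow{g} C\to 0$ in $\mathbf C(\Modr R)$ induces an exact triangle
$Z\xrightarrow{\underline i} X\xrightarrow{\underline g} C\xrightarrow{\underline w} Z[-1]$
in $\mathbf K(\Modr R)$, so that applying $\Hom_{\mathbf K(\Modr R)}(\widehat X,-)$ for $\widehat X\in\mathcal X$ produces a long exact sequence relating $\Hom_{\mathbf K(\Modr R)}(\widehat X,Z)$, $\Hom_{\mathbf K(\Modr R)}(\widehat X,X)$, $\Hom_{\mathbf K(\Modr R)}(\widehat X,C)$ and the suspended terms $\Hom_{\mathbf K(\Modr R)}(\widehat X,Z[-1])$ and $\Hom_{\mathbf K(\Modr R)}(\widehat X,C[1])$. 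The hypothesis $\mathcal X[-1]\subseteq\mathcal X$ is precisely what lets me feed shifts of $\widehat X$ back into $\mathcal X$, and this is the mechanism that ties the orthogonality condition in (2) to the coreflection condition in (1).

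For (1)$\Rightarrow$(2), given a coreflection $\underline f\colon L\to C$ with $L\in\mathcal X$, I would apply Lemma \ref{l:Semisplit_epimorphism}(1) to $f$ to obtain a contractible complex $W\in\mathcal X$ and a semi-split epimorphism $g\colon X\to C$ with $X=L\oplus W\in\mathcal X$ and $\underline g=\underline f\,\underline h^{-1}$, where $\underline h$ is an isomorphism in $\mathbf K(\Modr R)$. Setting $Z=\Ker g$ gives the required semi-split sequence. Since $\underline f$ is a coreflection, $\Hom_{\mathbf K(\Modr R)}(\widehat X,\underline g)$ is an isomorphism for every $\widehat X\in\mathcal X$; applying the same to the shift $\widehat X[-1]\in\mathcal X$ (here $\mathcal X[-1]\subseteq\mathcal X$ is used) makes $\underline g$ induce an isomorphism on the suspended term as well. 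In the long exact sequence surrounding $\Hom_{\mathbf K(\Modr R)}(\widehat X,Z)$ this forces both adjacent maps to vanish, whence $\Hom_{\mathbf K(\Modr R)}(\widehat X,Z)=0$.

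For (2)$\Rightarrow$(1), I would show that the morphism $\underline g\colon X\to C$ coming from the sequence in (2) is itself an $\mathcal L$-coreflection. As $X\in\mathcal X$, it suffices to check that $\Hom_{\mathbf K(\Modr R)}(\widehat X,\underline g)$ is an isomorphism for all $\widehat X\in\mathcal X$. Injectivity is immediate from the long exact sequence together with $\Hom_{\mathbf K(\Modr R)}(\widehat X,Z)=0$; for surjectivity one needs the vanishing of the suspended term $\Hom_{\mathbf K(\Modr R)}(\widehat X,Z[-1])$, i.e.\ orthogonality against the cone of $\underline g$, and this is exactly where the shift-closure of $\mathcal X$ must be invoked to transport the hypothesis $\Hom_{\mathbf K(\Modr R)}(\mathcal X,Z)=0$ onto $Z[-1]$. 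Establishing this object by object shows that the inclusion $\mathcal L\to\mathcal D$ admits the requisite coreflections.

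Finally, for the \emph{Moreover} clause I would upgrade the homotopy-level coreflection to an honest precover in $\mathbf C(\Modr R)$, using that the cover $g$ is a semi-split (hence degreewise split) epimorphism. Given $\widehat X\in\mathcal X$ and a cochain map $a\colon\widehat X\to C$, surjectivity of $\Hom_{\mathbf K(\Modr R)}(\widehat X,\underline g)$ yields a cochain map $b$ with $g b$ homotopic to $a$; writing the homotopy as $s$ and lifting it through $g$ along a degreewise section to some $\tilde s$, the corrected map $b-(d_X\tilde s+\tilde s\, d_{\widehat X})$ is again a cochain map and satisfies $g\bigl(b-(d_X\tilde s+\tilde s\, d_{\widehat X})\bigr)=a$ exactly, since $g$ is a cochain map with $g\tilde s=s$. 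Thus $\Hom_{\mathbf C(\Modr R)}(\widehat X,g)$ is surjective for every $\widehat X\in\mathcal X$, so $g$ is an $\mathcal X$-precover of $C$ and $\mathcal X$ is precovering in $\mathcal Y$. I expect the main obstacle to be the suspension bookkeeping in (2)$\Rightarrow$(1): reconciling the orthogonality $\Hom_{\mathbf K(\Modr R)}(\mathcal X,Z)=0$, which controls injectivity, with the surjectivity requirement living one suspension away on the cone $Z[-1]$, using only the one-sided closure $\mathcal X[-1]\subseteq\mathcal X$; the homotopy-to-on-the-nose upgrade is then routine.
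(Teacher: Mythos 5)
Your proof of the equivalence (1)$\Leftrightarrow$(2) follows the paper's own route: both directions run through the exact triangle $Z \to X \to C \to Z[-1]$ induced by the semi-split sequence, the long exact sequence obtained from the homological functor $\Hom_{\mathbf K(\Modr R)}(\widehat X,-)$, and Lemma~\ref{l:Semisplit_epimorphism} to replace the coreflection by a semi-split epimorphism in (1)$\Rightarrow$(2). The one place you genuinely deviate is the ``moreover'' clause: the paper constructs the precover directly from the sequence in (2) --- a degreewise lift $k$ of $h$ whose defect $d_X k - k d_{\widehat X}$ factors through $Z$ and is then contracted using the orthogonality $\Hom_{\mathbf K(\Modr R)}(\widehat X,Z[-1])=0$ --- whereas you reuse the already-established homotopy-level coreflection and then kill the residual homotopy by lifting it through a degreewise section of $g$; both arguments turn on the same mechanism (degreewise splitness upgrades homotopy statements to strict ones), and your version is correct and slightly more modular, since it applies verbatim to any semi-split epimorphism that becomes a coreflection in the homotopy category. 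One further remark: the ``suspension bookkeeping'' you flag as the main obstacle in (2)$\Rightarrow$(1) is a real wrinkle, but it is present in the paper's proof as well, not something your approach introduces. The surjectivity step needs $\Hom_{\mathbf K(\Modr R)}(\widehat X,Z[-1])=0$, and since $\Hom_{\mathbf K(\Modr R)}(\widehat X,Z[-1])\cong\Hom_{\mathbf K(\Modr R)}(\widehat X[1],Z)$, hypothesis (2) yields this only when $\widehat X[1]\in\mathcal X$, i.e.\ under the closure $\mathcal X[1]\subseteq\mathcal X$, which is opposite to the stated $\mathcal X[-1]\subseteq\mathcal X$ (the stated closure is the one used, correctly, in (1)$\Rightarrow$(2)). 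The paper simply asserts this vanishing without comment; in all intended applications $\mathcal X$ is closed under both shifts, so the results are unaffected, but a fully rigorous statement should either assume two-sided shift closure or include the vanishing of $\Hom_{\mathbf K(\Modr R)}(\widehat X,Z[-1])$ in condition (2). In this respect your proposal is exactly as complete as the paper's own argument.
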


\begin{proof}
(1) $\Rightarrow$ (2). We use the argument of \cite[Lemma 2.6]{Jorgensen}. Let $q$ be the right adjoint of the inclusion functor $i:\mathcal L \rightarrow \mathcal D$, and let $\nu$ be the counit of the adjunction. Let $C$ be a complex in $\mathcal Y$ and consider the morphism $\nu_C:q(C) \rightarrow C$. By Lemma \ref{l:Semisplit_epimorphism}, we can find a complex $X \in \mathcal X$, an isomorphism $\underline h: X \rightarrow q(C)$ in $\mathcal D$, and a semi-split epimorphism $f:X \rightarrow C$ in $\C(\Modr R)$ such that $\nu_C\underline h=\underline f$. Then, it is easy to check that $\underline f$ is an $\mathcal L$-coreflection of $C$ in $\mathcal D$.

Now, consider the short exact sequence in $\C(\Modr R)$,
\begin{displaymath}
\begin{tikzcd}
0 \arrow{r} & Z \arrow{r}{g} & X \arrow{r}{f} & C \arrow{r} & 0
\end{tikzcd}
\end{displaymath}
and notice that, since it is semi-split, it induces the exact triangle in $\mathbf K(\Modr R)$,
\begin{displaymath}
\begin{tikzcd}
Z \arrow{r}{\underline g} & X \arrow{r}{\underline f} & C \arrow{r} & Z[-1]
\end{tikzcd}
\end{displaymath}
Given $\widehat X \in \mathcal X$, since $\Hom_{\mathbf K(\Modr R)}(\widehat X,-)$ is a homological functor by \cite[Lemma 1.1.10]{Neeman01}, there exists an exact sequence of abelian groups
\begin{displaymath}
\begin{tikzcd}
\Hom_{\mathbf K(\Modr R)}(\widehat X,Z) \arrow{r}{\beta_2} & \Hom_{\mathbf K(\Modr R)}(\widehat X,X) \arrow{r}{\alpha_2} & \Hom_{\mathbf K(\Modr R)}(\widehat X,C)
\end{tikzcd}
\end{displaymath}
in which $\alpha_2=\Hom_{\mathbf K(\Modr R)}(\widehat X,\underline f)$ is an isomorphism; then $\beta_2=0$. Analogously, there exists an exact sequence
\begin{displaymath}
\begin{tikzcd}
\Hom_{\mathbf K(\Modr R)}(\widehat X,X[1]) \arrow{r}{\alpha_1} & \Hom_{\mathbf K(\Modr R)}(\widehat X,C[1]) \arrow{r}{\beta_1} & \Hom_{\mathbf K(\Modr R)}(\widehat X,Z).
\end{tikzcd}
\end{displaymath}
Using that $\mathcal X[-1] \subseteq \mathcal X$, it is easy to check that $\alpha_1$ is an isomorphisms as well; then, $\beta_1=0$. Now, the exactness of
\begin{displaymath}
\begin{tikzcd}
\Hom_{\mathbf K(\Modr R)}(\widehat X,C[1]) \arrow{r}{\beta_1} & \Hom_{\mathbf K(\Modr R)}(\widehat X,Z) \arrow{r}{\beta_2} & \Hom_{\mathbf K(\Modr R)}(\widehat X,X)
\end{tikzcd}
\end{displaymath}
gives that $\Hom_{\mathbf K(\Modr R)}(\widehat X,Z)=0$, as desired.

(2) $\Rightarrow$ (1). Given $C \in \mathcal Y$, consider the sequence given by (2) and notice that, since it is semi-split, it induces the exact triangle in $\mathbf K(\Modr R)$, 
\begin{displaymath}
\begin{tikzcd}
Z \arrow{r}{\underline g} & X \arrow{r}{\underline f} & C \arrow{r} & Z[-1]
\end{tikzcd}
\end{displaymath}
Given $\widehat X \in \mathcal X$ and applying the homological functor $\Hom_{\mathbf K(\Modr R)}(\widehat X,-)$ to this triangle, we obtain the following exact sequence of abelian groups
\begin{displaymath}
\begin{tikzcd}
& 0=\Hom_{\mathbf K(\Modr R)}(\widehat X,Z) \arrow{r} & \Hom_{\mathbf K(\Modr R)}(\widehat X,X) \arrow{r}{\Hom(\widehat X,\underline f)} & \cdots\\
\cdots \arrow{r}{\Hom(\widehat X,f)} & \Hom_{\mathbf K(\Modr R)}(\widehat X,C) \arrow{r} & \Hom_{\mathbf K(\Modr R)}(\widehat X,Z[-1])=0
\end{tikzcd}
\end{displaymath}
from which it follows that $\Hom_{\mathbf K(\Modr R)}(\widehat X,\underline f)$ is a bijection. Thus, $\underline f$ is an $\mathcal L$-coreflection of $C$.

Now, we prove the last assertion of the theorem. Let $Y \in \mathcal Y$ and take the short exact sequence given by (2):
\begin{equation} \label{e:SES}
\begin{tikzcd}
0 \arrow{r} & Z \arrow{r}{g} & X \arrow{r}{f} & Y \arrow{r} & 0.\tag{\#}
\end{tikzcd}
\end{equation}
We prove that $f$ is an $\mathcal X$-precover of $Y$. Take $h:\widehat X \rightarrow Y$ a morphism in $\mathcal Y$ with $\widehat X \in \mathcal X$. Since $f^n$ is a split epimorphism for each $n \in \mathbb Z$, we can find $k^n:\widehat X^n \rightarrow X^n$ satisfying $f^nk^n=h^n$. Then, it is easy to check that $f^{n+1}(d_X^nk^n-k^{n+1}d_{\widehat X}^n)=0$, and, since (\ref{e:SES}) is exact, there exists $l^n:\widehat X^n \rightarrow Z^{n+1}$ with 
\begin{equation}\label{e:Morphisml}
g^{n+1}l^n=d_X^nk^n-k^{n+1}d_{\widehat X}^n.\tag{\#\#}
\end{equation}
Now, the family $\{l_n\mid n \in \mathbb Z\}$ satisfies that $g^{n+2}d_Z^{n+1}l^n=g^{n+2}l^{n+1}d_{\widehat X}^n$ for each $n \in \mathbb Z$ and, since $g^{n+2}$ is monic, it induces a morphism in $\C(\Modr R)$, $l:\widehat X \rightarrow Z[-1]$. 

Now, use the hypothesis that $\Hom_{\mathbf K(\Modr R)}(\widehat X,Z)=0$ to get a chain contraction, $s:\widehat X \Rightarrow Z$, of $l$. The identity $d_Z^ns^n+s^{n+1}d_{\widehat X}^n=l^n$ gives that $g^{n+1}l^n=d_X^ng^ns^n+g^{n+1}s^{n+1}d_{\widehat X}^n$ which, combined with (\ref{e:Morphisml}), gives
\begin{displaymath}
d_X^n(g^ns^n-k^n)=(g^{n+1}s^{n+1}-k^{n+1})d_{\widehat X}^n
\end{displaymath}
for each $n \in \mathbb Z$. This means that the family of morphisms $\{g^ns^n-k^n\mid n \in \mathbb Z\}$ actually defines a morphism $t:\widehat X \rightarrow X$ in $\mathcal Y$ which trivially satisfies that $ft=h$. This proves that $f$ is an $\mathcal X$-precover of $Y$.
\end{proof}

%The converse of this fact is not true: there exist precovering subcategories which are not coreflective (for instance, the subcategory of projective modules is precovering in $\Modr R$, but it is coreflective only when the ring is \textcolor{red}{Artículo Saorin Parra}). However, Neeman in \cite[Proposition 1.4]{Neeman10} has shown that, under certain conditions, it is possible to construct coreflections from precovers in triangulated categories. In a recent paper, Crivei, Saor\'{\i}n and the author have extended this result to additive categories, which in the case of triangulated categories reads:
%
%\begin{theorem}[Cort\'es-Izurdiaga, Crivei and Saor\'{\i}n, 2021]\label{t:Coreflective}
%Let $\mathcal T$ be a triangulated category with split idempotents and $\mathcal D$ a full additive subcategory of $\mathcal T$. Then $\mathcal D$ is coreflective if and only if it is precovering, closed under direct summands, closed under cones and $\mathcal D[-1] \subseteq \mathcal D$.
%\end{theorem}

As a consequence of this result we get:

\begin{corollary}\label{c:TotallyAcyclicPrecovers}
Suppose that the Verdier quotient $\mathbf K(\Proj)/\mathcal K$ has small Hom-sets. Then the class of all totally acyclic complexes is precovering in $\mathbf C(\Modr R)$.
\end{corollary}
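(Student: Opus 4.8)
The plan is to upgrade the homotopy-level coreflection of $\mathcal K$ to a genuine cochain-level precover. By hypothesis and Theorem \ref{t:coreflective}, $\mathcal K$ is coreflective, hence precovering, in $\mathbf K(\Modr R)$; thus every complex $M \in \mathbf C(\Modr R)$ admits a $\mathcal K$-precover $\nu_M \colon q(M) \to M$ in $\mathbf K(\Modr R)$ with $q(M)$ totally acyclic, and I fix a cochain map $\phi \colon q(M) \to M$ representing $\nu_M$. The difficulty is that $\nu_M$ only factors \emph{homotopy classes} of cochain maps, whereas a precover in $\mathbf C(\Modr R)$ must factor actual cochain maps. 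Note also that Theorem \ref{t:Precovers} cannot be invoked directly with $\mathcal Y = \mathbf C(\Modr R)$ and $\mathcal X$ the class of totally acyclic complexes, since its hypothesis $D_n(Y^n) \in \mathcal X$ fails whenever $Y^n$ is not projective; this is precisely what forces the correction described below.

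To remedy this I would adjoin a contractible correction term that is surjective at the level of complexes. Choosing for each $n$ a projective module $Q^n$ together with an epimorphism $p^n \colon Q^n \to M^n$, I form the contractible complex $C = \bigoplus_{n \in \mathbb Z} D_n(Q^n)$ and the cochain map $\tau \colon C \to M$ whose restriction to $D_n(Q^n)$ has components $p^n$ and $d_M^n p^n$ (this is the construction underlying Lemma \ref{l:Semisplit_epimorphism}, but carried out with projective covers so that $C$ stays inside $\mathcal K$). Since disks on projective modules are totally acyclic and $\mathcal K$ is closed under direct sums by Lemma \ref{p:Localising}, the complex $C$ is totally acyclic; hence so is $X := q(M) \oplus C$. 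I then set $f := (\phi,\tau) \colon X \to M$, which is surjective in each degree because $\tau$ already is.

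It remains to check that $f$ is a totally acyclic precover, i.e. that every cochain map $h \colon \widehat X \to M$ with $\widehat X$ totally acyclic factors through $f$. Because $\nu_M$ is a precover in $\mathbf K(\Modr R)$ and $\widehat X \in \mathcal K$, there is $\theta \colon \widehat X \to q(M)$ with $\underline h = \nu_M \underline\theta$, so that $h - \phi\theta$ is null-homotopic, say via a homotopy $s = (s^n \colon \widehat X^n \to M^{n-1})$. The main step, and the one I expect to be the real obstacle, is to show that this null-homotopic remainder factors through $\tau$: using that each $\widehat X^n$ is projective I lift $s^n$ along $p^{n-1}$ to $\hat s^n \colon \widehat X^n \to Q^{n-1}$, and then assemble a map $\sigma \colon \widehat X \to C$ given in the disk coordinates $C^n = Q^n \oplus Q^{n-1}$ by $\sigma^n = (\hat s^{n+1} d_{\widehat X}^n,\, \hat s^n)$. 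A direct verification (using $d_{\widehat X}^{n+1} d_{\widehat X}^n = 0$) shows that $\sigma$ is a genuine cochain map and that $\tau\sigma = h - \phi\theta$.

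Granting this, $f \circ (\theta,\sigma) = \phi\theta + \tau\sigma = h$, so $\Hom_{\mathbf C(\Modr R)}(\widehat X, f)$ is surjective and $f$ is the desired totally acyclic precover of $M$. The only nonformal ingredient is the coordinatewise verification that $\sigma$ is a cochain map hitting the homotopy remainder; everything else is the passage from the coreflection supplied by Theorem \ref{t:coreflective} to the chain level, together with the closure properties of $\mathcal K$ recorded in Lemma \ref{p:Localising}.
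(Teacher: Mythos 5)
Your proposal is correct, and it takes a genuinely different route from the paper's proof. The paper never touches chain-level computations in this corollary: it introduces the intermediate class $\mathcal X=\{P\oplus C \mid P \in \mathbf C(\Proj),\ C \text{ contractible}\}$, notes that its homotopy-category counterpart $\mathcal L$ is coreflective in $\mathbf K(\Modr R)$ (every object of $\mathcal L$ is isomorphic in $\mathbf K(\Modr R)$ to a complex of projectives, and $\mathbf K(\Proj)$ is coreflective there), so Theorem \ref{t:Precovers} makes $\mathcal X$ precovering in $\mathbf C(\Modr R)$; then, since $\mathcal K$ is coreflective in $\mathcal L$ by Theorem \ref{t:coreflective}, a second application of Theorem \ref{t:Precovers} makes the totally acyclic complexes precovering in $\mathcal X$, and composing the two precovers finishes. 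You instead build the precover directly: starting from the coreflection $\nu_M\colon q(M)\to M$ supplied by Theorem \ref{t:coreflective}, you correct it by the totally acyclic contractible complex $\bigoplus_{n} D_n(Q^n)$ built on projectives $Q^n$ mapping onto the $M^n$, and you absorb the null-homotopic discrepancy $h-\phi\theta$ into this summand by lifting the homotopy components along the epimorphisms $p^n$ (legitimate precisely because $\widehat X$ is degreewise projective); your coordinate verification that $\sigma$ is a cochain map with $\tau\sigma=h-\phi\theta$ is correct. What the paper's route buys is modularity: all chain-level work is quarantined inside Theorem \ref{t:Precovers} and the corollary becomes formal. What your route buys is more than self-containment: your replacement of disks $D_n(M^n)$ by disks on projectives covering $M^n$ is exactly the right device at the one delicate point. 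Indeed, as you observe, the disk hypothesis of Theorem \ref{t:Precovers} fails for the pair consisting of the totally acyclic complexes inside $\mathbf C(\Modr R)$; note that the same issue resurfaces in the paper's own second application of Theorem \ref{t:Precovers}, since the contractible summand of an object $Y\in\mathcal X$ need not have projective components, so $D_n(Y^n)$ need not be totally acyclic. Your projective-disk construction is indifferent to this and would in fact serve to repair that step as well.
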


\begin{proof}
Consider the following full subcategory of $\mathbf C(\Modr R)$,
\begin{displaymath}
\mathcal X = \{P \oplus C \mid P \in \mathbf C(\Proj), C \in \mathbf C(\Modr R) \textrm{ is contractible}\},
\end{displaymath}
and denote by $\mathcal L$ the full subcategory of $\mathbf K(\Modr R)$ whose class of objects is $\mathcal X$. Since every object of $\mathcal L$ is isomorphic to one object of $\mathbf K(\Proj)$, and $\mathbf K(\Proj)$ is coreflective in $\mathbf K(\Modr R)$ (as it is argued in the proof of \cite[Theorem 3.2]{Neeman10}), we conclude that $\mathcal L$ is coreflective in $\mathbf K(\Modr R)$. By Theorem \ref{t:Precovers}, $\mathcal X$ is precovering in $\mathbf C(\Modr R)$. 

Now, by Theorem \ref{t:coreflective}, $\mathcal K$ is coreflective in $\mathbf K(\Proj)$ which implies that $\mathcal K$ is coreflective in $\mathcal L$. Again by Theorem \ref{t:Precovers}, we obtain that the class of totally acyclic complexes is precovering in $\mathcal X$. 

From these two facts it immediately follows that the class of totally acyclic complexes is precovering in $\mathbf C(\Modr R)$.
\end{proof}

%\begin{remark}
%In a recent paper, \v{S}aroch and the author have proven that, when the ring is right $\Sigma$-pure-injective, the class of all totally acyclic complexes actually is deconstructible, see \cite[Corollary 2.15]{SaorinStovicek}. This is stronger than being precovering.
%\end{remark}

\section{Small Hom-sets}

The discussion developed in the preceding section leads us to the problem of when $\mathbf K(\Proj)/\mathcal K$ has small Hom-sets. As mentioned before, not always the Verdier quotient of a triangulated category by a thick subcategory has small Hom-sets, and it is interesting to have criteria that imply this property (see \cite[p. 100]{Neeman01}). In this section, we give such a criterion. We begin with the following lemma, which is true for triangulated categories:
 
\begin{lemma}\label{l:EnoughInjectivesImpliesSmal}
Let $\mathcal T$ be a triangulated subcategory of $\mathbf K(\Modr R)$ and $\mathcal D$ a thick subcategory of $\mathcal T$. Suppose that $\mathcal T$ has enough $\Mor_{\mathcal D}$-injective objects. Then the Verdier quotient $\mathcal T / \mathcal D$ has small Hom-sets.
\end{lemma}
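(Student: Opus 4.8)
The plan is to prove that $\Hom_{\mathcal T/\mathcal D}(X,Y)$ is a set for all objects $X,Y \in \mathcal T$ by first reducing to the case in which the target is $\Mor_{\mathcal D}$-injective, and then showing that for such a target every morphism in the quotient is represented by an honest morphism of $\mathcal T$.

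First I would invoke the standard feature of the Verdier construction that each $f \in \Mor_{\mathcal D}$ becomes invertible in $\mathcal T/\mathcal D$; with the description of morphisms by triples $(Z,f,g)$ recalled in Section 2, the image of $f\colon Z \to X$ under the canonical functor $\mathcal T \to \mathcal T/\mathcal D$ is the class of $(Z,1_Z,f)$, and its inverse is the class of $(Z,f,1_Z)$ (see \cite{Neeman01}). Using the hypothesis of enough $\Mor_{\mathcal D}$-injectives, I fix for the object $Y$ a morphism $i_Y\colon Y \to E_Y$ in $\Mor_{\mathcal D}$ with $E_Y$ being $\Mor_{\mathcal D}$-injective. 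Post-composition with the image of $i_Y$ in $\mathcal T/\mathcal D$ is then a bijection $\Hom_{\mathcal T/\mathcal D}(X,Y) \to \Hom_{\mathcal T/\mathcal D}(X,E_Y)$, since that image is an isomorphism. Hence it suffices to show that $\Hom_{\mathcal T/\mathcal D}(X,E)$ is small whenever $E$ is $\Mor_{\mathcal D}$-injective.

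The core step is to prove that the canonical map $\Hom_{\mathcal T}(X,E) \to \Hom_{\mathcal T/\mathcal D}(X,E)$, sending $h$ to the class of the triple $(X,1_X,h)$, is surjective when $E$ is $\Mor_{\mathcal D}$-injective. An arbitrary morphism $X \to E$ in the quotient is represented by a triple $(Z,f,g)$ with $f\colon Z \to X$ in $\Mor_{\mathcal D}$ and $g\colon Z \to E$ in $\mathcal T$. Since $E$ is $\Mor_{\mathcal D}$-injective and $f \in \Mor_{\mathcal D}$, the map $\Hom_{\mathcal T}(f,E)\colon \Hom_{\mathcal T}(X,E) \to \Hom_{\mathcal T}(Z,E)$ is an epimorphism, so there is $h\colon X \to E$ in $\mathcal T$ with $hf=g$. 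Taking $Z''=Z$, $u=1_Z$ and $v=f$ in the equivalence relation on triples recalled in Section 2, one checks that both resulting triangles commute and that $(Z,f,g) \sim (X,1_X,h)$. Thus every element of $\Hom_{\mathcal T/\mathcal D}(X,E)$ is the image of an element of the genuine hom-set $\Hom_{\mathcal T}(X,E)$, which is small because $\mathcal T$ is a subcategory of $\mathbf K(\Modr R)$; therefore $\Hom_{\mathcal T/\mathcal D}(X,E)$ is small and the theorem follows.

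I expect the main difficulty to be bookkeeping rather than conceptual. The delicate points are to align the direction of the injectivity condition with the roof: it is injectivity of the \emph{target} $E$ against the denominator $f\colon Z \to X$ that yields the extension $h$, so that the numerator $g$ factors as $hf$; and to verify the equivalence $(Z,f,g)\sim(X,1_X,h)$ against the precise definition of the relation, confirming that the chosen data make both parts of the comparison diagram commute and that the comparison triple has its left leg $f$ in $\Mor_{\mathcal D}$. The reduction in the second paragraph also rests on the invertibility of $\Mor_{\mathcal D}$-morphisms in $\mathcal T/\mathcal D$, which I would cite from the general theory of Verdier quotients rather than reprove here.
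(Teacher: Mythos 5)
Your proof is correct and follows essentially the same route as the paper: reduce to an $\Mor_{\mathcal D}$-injective target $E$ via the isomorphism $Y \cong E$ in $\mathcal T/\mathcal D$, then show the canonical map $\Hom_{\mathcal T}(X,E) \to \Hom_{\mathcal T/\mathcal D}(X,E)$ is surjective by extending the numerator $g$ of a roof $(Z,f,g)$ along the denominator $f \in \Mor_{\mathcal D}$ using injectivity of $E$. Your comparison triple ($Z''=Z$, $u=1_Z$, $v=f$) is exactly the one used in the paper's commutative diagram, so the verification of the equivalence $(Z,f,g)\sim(X,1_X,h)$ matches the paper's argument step for step.
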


\begin{proof}
Let $X$ and $Y$ be objects in $\mathcal T$. Take $\underline f:Y \rightarrow I$ a morphism in $\Mor_{\mathcal D}$ with $I$ being $\Mor_{\mathcal D}$-injective. Then $Y \cong I$ in $\mathcal T/\mathcal D$, so that we only have to see that $\Hom_{\mathcal T/\mathcal D}(X,I)$ is a set.

We prove that the canonical map $\phi:\Hom_{\mathcal T}(X,I) \rightarrow \Hom_{\mathcal T/\mathcal D}(X,I)$ is surjective. Take a morphism $(Z,s,f) \in \Hom_{\mathcal T/\mathcal D}(X,I)$. Since $s \in \Mor_{\mathcal D}$ and $I$ is $\Mor_{\mathcal D}$-injective, there exists $g:X \rightarrow I$ such that $gs=f$. Then, the following diagram is commutative:
\begin{displaymath}
\begin{tikzcd}
 & Z \arrow{dl}{s} \arrow{dr}{f} & \\
X & Z \arrow{l}{s} \arrow{u}{1_Z} \arrow{r}{f} \arrow{d}{s} & I\\
 & X \arrow{ul}{1_X} \arrow{ur}{g} & \\
\end{tikzcd}
\end{displaymath}
This implies that $\phi(g) = (X,1_X,g) = (Z,s,f)$ and, consequently, $\phi$ is surjective.
\end{proof}

In \cite[Theorem 4.4]{CortesGuilBerkeAshish} we proved the existence of enough injectives in certain exact categories. Our objective now is to adapt the proof of this result to the setting of this paper in order to give a necessary condition for the existence of enough $\Mor_{\mathcal K}$-injectives in $\mathbf K(\Proj)$. We will need a couple of lemmas. The first one is straightforward:

\begin{lemma}\label{l:TransfiniteSplitMono}
Let $\mathcal A$ be an additive category with direct limits. Let $(A_\alpha,u_{\alpha\beta}\mid \alpha < \beta < \kappa)$ be a transfinite sequence such that $u_{\alpha\alpha+1}$ is a split epimorphism (resp. split monomorphism) for each $\alpha < \kappa$. Then the transfinite composition of the transfinite sequence is a split epimorphism (resp. split monomorphism).
\end{lemma}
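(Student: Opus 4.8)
The plan is to prove the split monomorphism case and obtain the split epimorphism case dually. Write $u_0\colon A_0 \to \varinjlim A_\alpha$ for the transfinite composition, and recall that showing $u_0$ is a split monomorphism amounts to producing a retraction $r\colon \varinjlim A_\alpha \to A_0$ with $r u_0 = 1_{A_0}$. Since a morphism out of the colimit is the same datum as a cocone on the direct system with vertex $A_0$, I would construct, by transfinite recursion on $\alpha < \kappa$, a compatible family of morphisms $r_\alpha\colon A_\alpha \to A_0$, meaning $r_\beta u_{\alpha\beta} = r_\alpha$ for all $\alpha < \beta < \kappa$, and then let $r$ be the morphism induced on $\varinjlim A_\alpha$. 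Because $u_0$ is the canonical morphism $A_0 \to \varinjlim A_\alpha$ and $r$ restricts to $r_0$ on $A_0$, the choice $r_0 = 1_{A_0}$ will yield $r u_0 = r_0 = 1_{A_0}$, as required.

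The recursion splits into three cases. For $\alpha = 0$ set $r_0 = 1_{A_0}$. At a successor $\alpha+1$, the hypothesis that $u_{\alpha\alpha+1}$ is a split monomorphism supplies a retraction $p_\alpha\colon A_{\alpha+1} \to A_\alpha$ with $p_\alpha u_{\alpha\alpha+1} = 1_{A_\alpha}$; I set $r_{\alpha+1} = r_\alpha p_\alpha$, so that $r_{\alpha+1} u_{\alpha\alpha+1} = r_\alpha$, and compatibility with all smaller indices follows from this identity together with $u_{\gamma,\alpha+1} = u_{\alpha\alpha+1} u_{\gamma\alpha}$ and the inductive hypothesis. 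At a limit ordinal $\beta$, the family $\{r_\alpha\colon A_\alpha \to A_0\}_{\alpha<\beta}$ is a cocone on the subsystem indexed by $\alpha < \beta$, so the universal property of $\varinjlim_{\alpha<\beta} A_\alpha$ gives a unique $\bar r\colon \varinjlim_{\alpha<\beta} A_\alpha \to A_0$, and composing with the inverse of the continuity isomorphism $c_\beta\colon \varinjlim_{\alpha<\beta} A_\alpha \to A_\beta$ defines $r_\beta = \bar r c_\beta^{-1}$.

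The step I expect to demand the most care, and the only place where the defining property of a transfinite sequence is genuinely used, is the limit stage: one must verify that $r_\beta$ is again compatible, i.e. $r_\beta u_{\alpha\beta} = r_\alpha$ for $\alpha < \beta$. This reduces to the fact that the continuity isomorphism satisfies $c_\beta \iota_\alpha = u_{\alpha\beta}$, where $\iota_\alpha\colon A_\alpha \to \varinjlim_{\alpha<\beta} A_\alpha$ is the structure morphism; then $r_\beta u_{\alpha\beta} = \bar r c_\beta^{-1} c_\beta \iota_\alpha = \bar r \iota_\alpha = r_\alpha$. Once the compatible family $\{r_\alpha\}_{\alpha<\kappa}$ is assembled, the universal property of $\varinjlim A_\alpha$ produces $r$ with $r u_0 = 1_{A_0}$, which settles the split monomorphism case. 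The split epimorphism statement is obtained by the dual argument.
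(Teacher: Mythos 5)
Your treatment of the split monomorphism half is correct, and it is surely the ``straightforward'' argument the paper has in mind (the paper gives no proof of this lemma at all): the recursion $r_0=1_{A_0}$, $r_{\alpha+1}=r_\alpha p_\alpha$, with limit stages handled through the continuity isomorphism, does produce a cocone, precisely because retractions compose on the correct side ($r_{\alpha+1}u_{\alpha\alpha+1}=r_\alpha p_\alpha u_{\alpha\alpha+1}=r_\alpha$), and the induced morphism $r$ out of $\varinjlim A_\alpha$ satisfies $ru_0=r_0=1_{A_0}$. Note that this is also the only half of the lemma the paper ever uses: in the proof of Theorem \ref{t:ExistenceInjective} it is applied degreewise to conclude that the maps $t_\alpha$ into the direct limit are semi-split monomorphisms. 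The genuine gap is your final sentence. The split epimorphism case is \emph{not} the categorical dual of what you proved: dualizing turns the direct system into an inverse system and $\varinjlim$ into $\varprojlim$, so the dual statement concerns the canonical map $\varprojlim A_\alpha\to A_0$ of a transfinite tower with split epimorphic transition maps, not the transfinite composition $A_0\to\varinjlim A_\alpha$ of a direct system. If one instead tries to mimic your recursion, it breaks down: choosing sections $\sigma_\alpha$ with $u_{\alpha\alpha+1}\sigma_\alpha=1_{A_{\alpha+1}}$ and setting $s_{\alpha+1}=s_\alpha\sigma_\alpha$, the required cocone identity $s_{\alpha+1}u_{\alpha\alpha+1}=s_\alpha$ would need $\sigma_\alpha u_{\alpha\alpha+1}=1_{A_\alpha}$, whereas this composite is only an idempotent of $A_\alpha$.

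Moreover, no argument can close this gap, because the split epimorphism half is false as stated. Work in the category of abelian groups with $\kappa=\omega$, so that every direct system is a transfinite sequence. Fix a prime $p$, let $A_0$ be free on $e_0,e_1,\dots$, let $K_n=\langle e_i-pe_{i+1}\mid i<n\rangle$, and put $A_n=A_0/K_n$ with the quotient maps as transition maps. Since $\{e_0-pe_1,\dots,e_{n-1}-pe_n,e_n,e_{n+1},\dots\}$ is a basis of $A_0$, each $A_n$ is free, so each surjection $u_{n,n+1}\colon A_n\to A_{n+1}$ is a split epimorphism. As all the maps are surjective, $\varinjlim A_n\cong A_0/\bigcup_nK_n$, and $e_i\mapsto p^{-i}$ identifies this quotient with $\mathbb{Z}[1/p]$ (modulo $K_N$ one has $e_i=p^{N-i}e_N$ for $i\le N$, which shows the kernel of $e_i\mapsto p^{-i}$ is exactly $\bigcup_nK_n$); the transfinite composition $u_0$ is this quotient map. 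A section of $u_0$ would exhibit $\mathbb{Z}[1/p]$ as a direct summand of the free abelian group $A_0$, hence as a projective, hence free, $\mathbb{Z}$-module; but $\mathbb{Z}[1/p]$ is nonzero and $p$-divisible, so it is not free. Thus $u_0$ is an epimorphism that is not split. The correct conclusion is that only the split monomorphism statement holds in general --- fortunately, that is the half your recursion establishes and the half the paper actually needs.
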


Given a transfinite sequence of complexes, $(X_\alpha,u_{\alpha\beta} \mid \alpha < \beta < \kappa)$, we define, for each non-zero $\alpha < \beta$ and $n \in \mathbb Z$, the morphism $c_{\alpha\beta}^n:\cone(u_{0\alpha})^n \rightarrow \cone(u_{0\beta})^n$ by $c_{\alpha\beta}^n(x,y) = (x,u_{\alpha\beta}^n(y))$ for each $(x,y) \in \cone(u_{0\alpha})^n$. It is very easy to see that these morphisms induce a cochain map $c_{\alpha\beta}:\cone(u_{0\alpha})\rightarrow \cone(u_{0\beta})$.

\begin{lemma}\label{l:ConeOfTransfiniteComposition}
Let $(X_\alpha,u_{\alpha\beta}\mid \alpha < \beta < \kappa)$ be a transfinite sequence of complexes with direct limit $(u_\alpha:X_\alpha \rightarrow X\mid \alpha < \kappa)$. Then the system $(\cone(u_{0\alpha}),c_{\alpha\beta}\mid 0< \alpha < \beta < \kappa)$, with the definition of $c_{\alpha\beta}$ made in the previous paragraph, is a transfinite sequence of complexes whose direct limit is $\cone(u_0)$.
\end{lemma}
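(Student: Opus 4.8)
The plan is to reduce everything to the fact that direct limits in $\C(\Modr R)$ are computed degreewise in $\Modr R$, where they are exact and commute with finite direct sums. The basic observation driving every step is that in degree $n$ one has $\cone(u_{0\alpha})^n = X_0^{n+1} \oplus X_\alpha^n$, and the transition morphism $c_{\alpha\beta}^n$ is the identity on the first summand and $u_{\alpha\beta}^n$ on the second. Hence, as a direct system of modules, $(\cone(u_{0\alpha})^n, c_{\alpha\beta}^n)$ splits as the direct sum of the constant system on $X_0^{n+1}$ and the system $(X_\alpha^n, u_{\alpha\beta}^n)$ coming from the original transfinite sequence in degree $n$. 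I would first record that the $c_{\alpha\beta}$ genuinely form a direct system: the relations $c_{\alpha\alpha}=1$ and $c_{\beta\gamma}c_{\alpha\beta}=c_{\alpha\gamma}$ are trivial on the $X_0^{n+1}$ summand and reduce on the other summand to the cocycle identities $u_{\alpha\alpha}=1$ and $u_{\beta\gamma}u_{\alpha\beta}=u_{\alpha\gamma}$ of the original sequence.

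Next I would check the transfinite condition. Fix a limit ordinal $\beta<\kappa$ and note that $\{\alpha : 0<\alpha<\beta\}$ is cofinal in $\{\alpha : \alpha<\beta\}$, so the two corresponding colimits agree. Using the degreewise splitting above and that direct limits commute with finite direct sums, the canonical morphism in degree $n$ is identified with
\begin{displaymath}
X_0^{n+1} \oplus \varinjlim_{\alpha < \beta}X_\alpha^n \longrightarrow X_0^{n+1} \oplus X_\beta^n,
\end{displaymath}
which is the identity on the first summand and the canonical morphism $\varinjlim_{\alpha<\beta}X_\alpha^n \to X_\beta^n$ on the second. The latter is an isomorphism precisely because the original sequence is a transfinite sequence, so the whole degree-$n$ morphism is an isomorphism, and therefore so is $\varinjlim_{0<\alpha<\beta}\cone(u_{0\alpha}) \to \cone(u_{0\beta})$.

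Finally I would identify the limit with $\cone(u_0)$. I would define cochain maps $c_\alpha\colon \cone(u_{0\alpha}) \to \cone(u_0)$ by $c_\alpha^n(x,y) = (x, u_\alpha^n(y))$, verify they satisfy $c_\beta c_{\alpha\beta}=c_\alpha$, and conclude they form a cocone; in degree $n$ the induced morphism $\varinjlim_{0<\alpha<\kappa}\cone(u_{0\alpha})^n \to \cone(u_0)^n$ is again the identity on $X_0^{n+1}$ and the canonical isomorphism $\varinjlim_\alpha X_\alpha^n \cong X^n$ on the second summand, so it is an isomorphism, which upgrades to an isomorphism of complexes. The one place where the cone construction interacts nontrivially with the limit, and which I expect to be the main point to get right, is the compatibility with differentials: the differential $d^n(x,y) = (-d_{X_0}^{n+1}(x),\, d_{X_\alpha}^n(y) - u_{0\alpha}^{n+1}(x))$ of $\cone(u_{0\alpha})$ carries a twisting term built from $u_{0\alpha}$, and one must confirm that applying $u_\alpha^{n+1}$ converts it into the twisting term $u_0^{n+1}(x)$ appearing in $d_{\cone(u_0)}$. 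This is exactly the cocone relation $u_\alpha^{n+1}u_{0\alpha}^{n+1}=u_0^{n+1}$ together with the fact that each $u_\alpha$ is a cochain map, i.e. $u_\alpha^{n+1}d_{X_\alpha}^n = d_X^n u_\alpha^n$; granting these, the $c_\alpha$ are cochain maps and the colimit complex carries precisely the differential of $\cone(u_0)$, completing the proof.
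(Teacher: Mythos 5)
Your proof is correct, but it identifies the colimits by a genuinely different mechanism than the paper does. The paper adjoins $X_\kappa=X$, $u_{\alpha\kappa}=u_\alpha$ to the sequence and then, for each limit ordinal $\beta\leq\kappa$, verifies the universal property by hand: given an arbitrary cocone $(f_\alpha:\cone(u_{0\alpha})\rightarrow C)$, it restricts along the inclusions $i_\alpha:X_\alpha\rightarrow\cone(u_{0\alpha})$ to get a cocone on the original sequence, obtains $g:X_\beta\rightarrow C$ from the universal property of $X_\beta$, and defines the mediating map by $f^n(x,y)=f_1^n(x,0)+g^n(y)$, the bulk of the work being the twisting-term computation showing $f$ is a cochain map, plus uniqueness. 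You instead use that direct limits in $\C(\Modr R)$ are computed degreewise and that in each degree the cone system splits as the direct sum of the constant system on $X_0^{n+1}$ and the original system $(X_\alpha^n,u_{\alpha\beta}^n)$; since colimits commute with finite direct sums, all the canonical comparison maps are degreewise isomorphisms, and the only interaction with the differentials you must check is that the cocone maps $c_\alpha:\cone(u_{0\alpha})\rightarrow\cone(u_0)$ are cochain maps, which you correctly reduce to the relations $u_\alpha^{n+1}u_{0\alpha}^{n+1}=u_0^{n+1}$ and the fact that each $u_\alpha$ is a cochain map. Your cofinality remark (the cone system is indexed by $0<\alpha$) handles explicitly a point the paper passes over in silence. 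What the paper's route buys is an explicit formula for the mediating morphism out of $\cone(u_{0\beta})$ and an explicit uniqueness check, i.e., the full universal property spelled out; what yours buys is brevity and a cleaner localization of where the cone construction actually interacts with the limit --- indeed, the paper's formula $f^n(x,y)=f_1^n(x,0)+g^n(y)$ is precisely your degreewise splitting in disguise.
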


\begin{proof}
Set $X_\kappa=X$ and $u_{\alpha\kappa}=u_\alpha$ for each $\alpha < \kappa$. Then $(X_\alpha,u_{\alpha\beta}\mid \alpha < \beta \leq \kappa)$ is a transfinite sequence. Now consider the system $(\cone(u_{0\alpha}),c_{\alpha\beta}\mid 0< \alpha < \beta \leq \kappa)$, with the $c_{\alpha\beta}$ defined as in the previous paragraph, which is easily seen to be direct. We only have to prove that it is continuous, that is, for every $\beta \leq \kappa$ limit, $(\cone(u_{0\beta}),c_{\alpha\beta}\mid 0 < \alpha < \beta)$ is the direct limit of $(\cone(u_{0\alpha},c_{\alpha \gamma}\mid 0 < \alpha < \gamma < \beta)$. Let $C$ be a complex and $(f_\alpha:\cone(u_{0\alpha}) \rightarrow C \mid \alpha < \beta)$ be a direct system of cochain maps. Denote by $i_\alpha:X_\alpha \rightarrow \cone(u_{0\alpha})$ the inclusion and by $g_\alpha=f_\alpha i_\alpha$ for each $\alpha < \beta$, we get a direct system of morphisms $(g_\alpha:X_\alpha \rightarrow C \mid 0 < \alpha < \beta)$. Since $(X_\alpha,u_{\alpha\gamma}\mid 0 < \alpha < \gamma < \beta)$ is a transfinite sequence, there exists a unique cochain map $g:X_\beta \rightarrow C$ satisfying $gu_{\alpha\beta} = g_\alpha$ for each $\alpha < \beta$. 

Now define $f^n:\cone(u_{0\beta})^n\rightarrow C^n$ by $f^n(x,y)=f_1^n(x,0)+g^n(y)$ for each $(x,y)\in \cone(u_{0\beta})^n$ and $n \in \mathbb Z$. Then, these $f^n$ define a cochain map $f:\cone(u_{0\beta})\rightarrow C$ since, for each $(x,y) \in \cone(u_{0\beta})$,
\begin{displaymath}
f^{n+1}d_{\cone(u_{0\beta})}^n(x,y) = f_1^{n+1}(-d_0^{n+1}(x),0)+g^{n+1}d_\beta^n(y)-g^{n+1}u_{0\beta}^{n+1}(y),
\end{displaymath}
and, using that $g^{n+1}u_{0\beta}^{n+1}(x)=g_1^{n+1}u_{01}(x)=f_1^{n+1}(0,u_{01}(x))$, we conclude that
\begin{eqnarray*}
f^{n+1}d_{\cone(u_{0\beta})}^n(x,y) = f_1^{n+1}(-d_0^{n+1}(x),0)-f_1^{n+1}(0,u_{01}(x))+g^{n+1}d_\beta^n(y)=\\ =f_1^{n+1}d_{\cone(u_{01})}^n(x,0)+g^{n+1}d_\beta^n(y).
\end{eqnarray*}
Now, using that both $g$ and $f_1$ are cochain maps, the last expression is equal to
\begin{displaymath}
d_C^nf_n^n(x,0)+d_C^ng^n(y) = d_C^nf^n(x,y)
\end{displaymath}
which means that $f$ is a cochain map. Moreover, it is easy to see that this $f$ is unique satisfying $f c_{\alpha\beta}=f_\alpha$ for each $\alpha < \beta$, which means that $(\cone(u_{0\beta}),c_{\alpha\beta}\mid 0 < \alpha < \beta)$ is the direct limit of $(\cone(u_{0\alpha},c_{\alpha \gamma}\mid 0 < \alpha < \gamma < \beta)$, as desired.
\end{proof}

\begin{lemma}\label{l:TransfiniteCompositionMor}
Let $(X_\alpha,u_{\alpha\beta}\mid \alpha < \beta < \kappa)$ be a transfinite sequence of complexes with transfinite composition $u$ such that $\underline u_{\alpha\beta}\in \Mor_{\mathcal K}$ and $u_{\alpha \beta}$ is a semi-split monomorphism for each $\alpha < \beta < \kappa$. Then $\underline u \in \Mor_{\mathcal K}$.
\end{lemma}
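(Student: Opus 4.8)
The plan is to use Lemma \ref{l:ConeOfTransfiniteComposition} to realise a cone of $\underline u$ as a direct limit of totally acyclic complexes, and then to verify directly that this limit is itself totally acyclic. Write $u=u_0$ and, for $0<\alpha<\kappa$, $W_\alpha=\cone(u_{0\alpha})$ and $W=\cone(u_0)$. By Lemma \ref{l:ConeOfTransfiniteComposition}, $(W_\alpha,c_{\alpha\beta}\mid 0<\alpha<\beta<\kappa)$ is a transfinite sequence with direct limit $W$. Since the strict triangle $X_0\to X\to W\to X_0[-1]$ exhibits $W$ as a cone of $u$, it suffices to prove $W\in\K$; then $\underline u\in\Mor_{\K}$ by definition. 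To prove $W\in\K$ I will use the criterion recalled in the preliminaries: an acyclic complex of projectives is totally acyclic if and only if all its cycles lie in ${}^{\perp}\Proj$.

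First some observations about the sequence $(W_\alpha)$. From the formula $c_{\alpha\beta}^n(x,y)=(x,u_{\alpha\beta}^n(y))$ and the fact that each $u_{\alpha\beta}$ is a semi-split monomorphism, each $c_{\alpha\beta}$ is a semi-split monomorphism; in particular its degreewise cokernels are direct summands of projective modules, hence projective, and a short computation identifies $\Coker c_{\alpha\beta}$ with the complex $\Coker u_{\alpha\beta}$. The semi-split short exact sequence $0\to X_\alpha\to X_\beta\to\Coker u_{\alpha\beta}\to 0$ shows $\Coker u_{\alpha\beta}\cong\cone(u_{\alpha\beta})$ in $\mathbf{K}(\Modr R)$, which is totally acyclic because $\underline u_{\alpha\beta}\in\Mor_{\K}$; being also a complex of projectives, $\Coker c_{\alpha\beta}$ is totally acyclic. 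Likewise each $\underline u_{0\alpha}\in\Mor_{\K}$ (the hypothesis with first index $0$), so $W_\alpha$, a complex of projectives isomorphic in the homotopy category to an object of $\K$, is totally acyclic: its homology vanishes and $\H^n(\Hom_R(W_\alpha,Q))\cong\Hom_{\mathbf K(\Modr R)}(W_\alpha,S_n(Q))=0$ for every projective $Q$, exactly as in the proof of Lemma \ref{p:Localising}.

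Now I verify the three defining properties for $W$. For projectivity of the terms, $W^n=\varinjlim_\alpha W_\alpha^n$ is the direct limit of a continuous transfinite sequence of split monomorphisms between projective modules with projective cokernels; choosing compatible splittings by transfinite recursion identifies it with $W_1^n\oplus\bigoplus_\alpha(\Coker c_{\alpha,\alpha+1})^n$, which is projective. For acyclicity, homology commutes with direct limits, so $\H^n(W)=\varinjlim_\alpha\H^n(W_\alpha)=0$. The heart of the argument is the cycle condition. Taking kernels (which commute with direct limits) gives $\Z^n(W)=\varinjlim_\alpha\Z^n(W_\alpha)$, a continuous transfinite chain. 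Applying the snake lemma to the degreewise-split short exact sequence $0\to W_\alpha\to W_{\alpha+1}\to\Coker c_{\alpha,\alpha+1}\to 0$, and using that $W_\alpha$ is acyclic, one checks that $0\to\Z^n(W_\alpha)\to\Z^n(W_{\alpha+1})\to\Z^n(\Coker c_{\alpha,\alpha+1})\to 0$ is exact. Thus $\Z^n(W)$ is filtered by the $\Z^n(W_\alpha)$ with successive quotients $\Z^n(\Coker c_{\alpha,\alpha+1})\in{}^{\perp}\Proj$ and with $\Z^n(W_1)\in{}^{\perp}\Proj$; since ${}^{\perp}\Proj$ is closed under transfinite extensions (Eklof's Lemma), $\Z^n(W)\in{}^{\perp}\Proj$. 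By the criterion above, $W$ is totally acyclic, so $W\in\K$ and $\underline u\in\Mor_{\K}$.

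The main obstacle is the cycle condition: the class ${}^{\perp}\Proj$ is not closed under direct limits, so one cannot simply pass the property $\Z^n(W_\alpha)\in{}^{\perp}\Proj$ to the colimit. The device that saves the argument is to exhibit $\Z^n(W)$ as a transfinite filtration with quotients in ${}^{\perp}\Proj$ and invoke Eklof's Lemma; this requires the preliminary identification of the quotients $\Z^n(W_{\alpha+1})/\Z^n(W_\alpha)$ with the cycles of the totally acyclic cokernels, for which the degreewise splitting of the $u_{\alpha\beta}$ (and hence the acyclicity of $W_\alpha$ used in the snake-lemma step) is essential. A secondary point requiring care is the projectivity of the terms $W^n$, where one must assemble the degreewise splittings coherently across limit ordinals.
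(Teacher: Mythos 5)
Your proof is correct, and its skeleton is the same as the paper's: you invoke Lemma \ref{l:ConeOfTransfiniteComposition} to present $\cone(u)$ as the direct limit of the transfinite sequence $(\cone(u_{0\alpha}),c_{\alpha\beta})$, observe that the $c_{\alpha\beta}$ are semi-split monomorphisms, and identify the successive quotients $\Coker c_{\alpha\alpha+1}$ as totally acyclic complexes, so that the sequence is a filtration by objects of $\mathcal K$. (Your route to the quotients is a minor variant: you compute $\Coker c_{\alpha\beta}\cong\Coker u_{\alpha\beta}\cong\cone(u_{\alpha\beta})$ and use $\underline u_{\alpha\beta}\in\Mor_{\mathcal K}$, whereas the paper gets $\Coker c_{\alpha\alpha+1}\in\mathcal K$ from closure of $\mathcal K$ under cones applied to $c_{\alpha\alpha+1}$; both are fine.) The genuine difference is the last step: the paper concludes by citing the fact that the class of totally acyclic complexes is closed under filtrations (\cite[Corollary 4.5]{CortesSaroch}), whereas you prove this closure directly for the filtration at hand --- projectivity of the terms of the limit by assembling the degreewise splittings along the continuous chain (in effect Kaplansky's theorem, which the paper itself uses in Theorem \ref{t:ExistenceInjective}), acyclicity because homology commutes with direct limits, and the cycle condition via the short exact sequences $0\to\Z^n(\cone(u_{0\alpha}))\to\Z^n(\cone(u_{0\alpha+1}))\to\Z^n(\Coker c_{\alpha\alpha+1})\to 0$ together with Eklof's Lemma for ${}^{\perp}\Proj$. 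Your snake-lemma step is sound: the connecting map out of $\Z^n(\Coker c_{\alpha\alpha+1})$ factors through $\H^{n+1}(\cone(u_{0\alpha}))=0$, so acyclicity of $\cone(u_{0\alpha})$ is exactly what is needed. What your version buys is self-containedness --- it replaces an external citation by an explicit argument, making transparent that the real content is Eklof's Lemma applied to the cycle filtration; what the paper's version buys is brevity and a statement (closure of total acyclicity under arbitrary $\mathcal K$-filtrations) that is more general than the instance you verify.
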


\begin{proof}
By Lemma \ref{l:ConeOfTransfiniteComposition}, $\cone(u)$ is the direct limit of the transfinite sequence\\ $(\cone(u_{0\alpha}),c_{\alpha\beta}\mid \alpha < \beta < \kappa)$, with $c_{\alpha\beta}$ as defined in the paragraph before Lemma \ref{l:ConeOfTransfiniteComposition}. Since $u_{\alpha\beta}$ is a semi-split monomorphism for each $\alpha < \beta$, then so is $c_{\alpha\beta}$. Now, using that $\cone(u_{0\alpha})$ and $\cone(u_{0\alpha+1})$ belong to $\mathcal K$ and that $c_{\alpha\alpha+1}$ is a semi-split monomorphisms, we conclude that $\Coker c_{\alpha\alpha+1} \in \mathcal K$. This says that $(\cone(u_{0\alpha}),c_{\alpha\beta}\mid \alpha < \beta < \kappa)$ actually is a filtration by totally acyclic complexes and, since the class of all totally acyclic complexes is closed under filtrations (see, for instance, \cite[Corollary 4.5]{CortesSaroch}), we get that $\underline u \in \Mor_{\mathcal K}$.
\end{proof}

\begin{remark}
Notice that the condition in the preceding lemma seems to be weaker than the property that every transfinite sequence $(X_\alpha,u_{\alpha\beta}\mid \alpha < \beta < \kappa)$ in $\mathbf C(\Modr R)$ with morphisms $\underline u_{\alpha\beta} \in \Mor_{\mathcal K}$ for each $\alpha < \beta < \kappa$ and transfinite composition $u$, satisfies that $\underline u \in \Mor_{\mathcal K}$. This is due to the fact that, although every morphism in the homotopy category can be represented by a semi-split monomorphism in $\mathbf C(\Modr R)$ by Lemma \ref{l:Semisplit_epimorphism}, it seems that this is not the case for transfinite sequences. That is, if $(X_\alpha,u_{\alpha\beta}\mid \alpha < \beta < \kappa)$ is a transfinite sequence in $\mathbf C(\Modr R)$, it seems that it might exist no transfinite sequence in $\mathbf C(\Modr R)$, $(Y_\alpha,v_{\alpha\beta}\mid \alpha < \beta < \kappa)$, with the same transfinite composition, with $v_{\alpha\beta}$ being a semi-split monomorphism and satisfying that $\underline u_{\alpha\beta}$ and $\underline v_{\alpha \beta}$ are isomorphic morphisms in $\mathbf K(\Modr R)$.
\end{remark}

Now we can prove the main result of our paper.

\begin{theorem}\label{t:ExistenceInjective}
Suppose that $\Mor_{\mathcal K}$ satisfies the generalized Baer Lemma in $\mathbf K(\Proj)$. Then there exists enough $\Mor_{\mathcal K}$-injective objects in $\mathbf K(\Proj)$. In particular:
\begin{enumerate}
\item The Verdier quotient $\mathbf K(\Proj)/\mathcal K$ has small Hom-sets.

\item The class of all Gorenstein-projective modules is precovering in $\Modr R$.

\item The class of all totally acyclic complexes is precovering in $\mathbf C(\Modr R)$.
\end{enumerate}
\end{theorem}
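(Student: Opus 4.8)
The plan is to settle the three numbered consequences first and then devote the real work to producing enough $\Mor_{\mathcal K}$-injective objects in $\mathbf K(\Proj)$, from which they all follow. Granting that existence, assertion (1) is immediate from Lemma \ref{l:EnoughInjectivesImpliesSmal} applied to $\mathcal T=\mathbf K(\Proj)$ and $\mathcal D=\mathcal K$ (note $\mathcal K$ is localising, hence thick, by Proposition \ref{p:Localising}); assertion (2) is then Corollary \ref{c:GorensteinPrecovers} and assertion (3) is Corollary \ref{c:TotallyAcyclicPrecovers}, both of which only need the smallness of the Hom-sets of $\mathbf K(\Proj)/\mathcal K$. So everything reduces to constructing, for each $A\in\mathbf K(\Proj)$, a morphism $i\colon A\to E$ in $\Mor_{\mathcal K}$ with $E$ being $\Mor_{\mathcal K}$-injective. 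By the generalized Baer Lemma hypothesis I fix a \emph{set} $\mathcal N\subseteq\Mor_{\mathcal K}$ such that $\mathcal N$-injectivity forces $\Mor_{\mathcal K}$-injectivity, so it suffices to make $E$ merely $\mathcal N$-injective. Using Lemma \ref{l:Semisplit_epimorphism}(2) I represent each $j\in\mathcal N$ by a genuine semi-split monomorphism $j\colon U_j\to V_j$ in $\mathbf C(\Proj)$ whose cokernel is totally acyclic (the cokernel of a semi-split monomorphism is its cone, which is totally acyclic exactly because $j\in\Mor_{\mathcal K}$).

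The construction is the standard transfinite small-object argument. I would build a continuous chain $A=A_0\to A_1\to\cdots\to A_\alpha\to\cdots$ ($\alpha<\kappa$) in $\mathbf C(\Proj)$ as follows: at a successor stage, consider the set of all pairs $(j,\phi)$ with $j\in\mathcal N$ and $\phi\in\Hom_{\mathbf K(\Proj)}(U_j,A_\alpha)$—this is a set because $\mathbf K(\Modr R)$ has small Hom-sets and $\mathcal N$ is a set—choose cochain-level representatives, and let $A_{\alpha+1}$ be the pushout in $\mathbf C(\Modr R)$ of $\coprod_{(j,\phi)}V_j\xleftarrow{\coprod j}\coprod_{(j,\phi)}U_j\xrightarrow{(\phi)}A_\alpha$; at limit ordinals take direct limits. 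Each transition $A_\alpha\to A_{\alpha+1}$ is a pushout of a semi-split monomorphism, hence itself a semi-split monomorphism, and its cokernel is $\coprod_{(j,\phi)}\Coker j$, a direct sum of totally acyclic complexes and so totally acyclic; degreewise $A_{\alpha+1}^n\cong A_\alpha^n\oplus(\coprod_{(j,\phi)}\Coker j)^n$, which keeps $A_{\alpha+1}$ in $\mathbf C(\Proj)$.

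Set $E=A_\kappa=\varinjlim_{\alpha<\kappa}A_\alpha$ for a regular cardinal $\kappa$ chosen large enough that every module $U_j^n$ (for $j\in\mathcal N$, $n\in\mathbb Z$) is $<\kappa$-presentable. The morphism $i\colon A\to E$ is the transfinite composition, and every $u_{\alpha\beta}$ is a semi-split monomorphism whose cokernel is a filtration of the totally acyclic cokernels above, hence totally acyclic, so $\underline u_{\alpha\beta}\in\Mor_{\mathcal K}$ and Lemma \ref{l:TransfiniteCompositionMor} gives $\underline i\in\Mor_{\mathcal K}$. It remains to check that $E$ is $\mathcal N$-injective, which is where $\kappa$ is used. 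Given $j\in\mathcal N$ and $\psi\colon U_j\to E$ in $\mathbf K(\Proj)$, the transition maps are degreewise split monomorphisms, so each $E^n$ is the directed union $\bigcup_\alpha A_\alpha^n$; by $<\kappa$-presentability and regularity of $\kappa$ (only countably many degrees, all absorbed below $\kappa$) a cochain representative of $\psi$ factors through some $A_\alpha$ as a cochain map $\psi'\colon U_j\to A_\alpha$. Then the pair $(j,\underline{\psi'})$ was treated at stage $\alpha+1$, so the structure map $V_j\to A_{\alpha+1}$ from that pushout composes with $A_{\alpha+1}\to E$ to an extension of $\psi$ along $j$; thus $\Hom_{\mathbf K(\Proj)}(j,E)$ is surjective and $E$ is $\mathcal N$-injective, hence $\Mor_{\mathcal K}$-injective.

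The main obstacle I anticipate is precisely this compactness (factorization) step: since we live in the homotopy category, one must check that a \emph{homotopy class} $U_j\to E$—not merely a cochain map—comes from a bounded stage, i.e.\ that $\varinjlim_\alpha\Hom_{\mathbf K(\Proj)}(U_j,A_\alpha)\to\Hom_{\mathbf K(\Proj)}(U_j,E)$ is surjective. This is what forces the careful choice of a single regular cardinal $\kappa$ bounding the presentability of all the $U_j^n$ at once, together with the observation that along a chain of degreewise split monomorphisms the direct limit is computed degreewise as a directed union, so that both a representing cochain map and any intervening homotopy are supported below $\kappa$. The remaining verifications—that pushouts of semi-split monomorphisms remain semi-split with totally acyclic cokernel, and that every $A_\alpha$ stays in $\mathbf C(\Proj)$—are routine given Lemmas \ref{l:Semisplit_epimorphism}, \ref{l:ConeOfTransfiniteComposition} and \ref{l:TransfiniteCompositionMor}.
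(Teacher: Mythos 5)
Your proposal is correct and follows essentially the same route as the paper: the paper's proof is exactly this transfinite small object argument in $\mathbf{C}(\Proj)$, differing only in bookkeeping (it collapses your set $\mathcal{N}$ into a single semi-split monomorphism $m$ using closure of $\Mor_{\mathcal{K}}$ under direct sums, indexes the successor pushouts by cochain maps rather than by chosen representatives of homotopy classes, and certifies $\Mor_{\mathcal{K}}$-membership of the transition maps via homotopy pushouts and Neeman's Lemma 1.5.8 instead of your direct totally-acyclic-cokernel argument, with Kaplansky's theorem handling degreewise projectivity at limit stages). The one point to state explicitly is that $\kappa$ must be taken \emph{uncountable} regular, not merely regular and large enough for $<\kappa$-presentability, since your factorization step needs the countably many degrees to be absorbed below $\kappa$ --- exactly as your parenthetical already uses.
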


\begin{proof}
The procedure of the proof is the same as the one of \cite[Theorem 4.4]{CortesGuilBerkeAshish}, but with some modifications. 

First, notice that, reasoning as in the first paragraph of the proof of the aforementioned theorem, and using that $\Mor_{\mathcal K}$ is closed under direct sums and satisfies the generalized Baer Lemma, we can find a morphism $\underline m:\underline M \rightarrow \underline N$ in $\Mor_{\mathcal K}$ such that any $\underline m$-injective object of $\mathbf K(\Proj)$ is $\Mor_{\mathcal K}$-injective. We may assume by Lemma \ref{l:Semisplit_epimorphism} that $m$ is a semi-split monomorphism.

Fix $Y$ a complex in $\mathbf C(\Proj)$ and take an uncountable regular cardinal $\kappa$ such that $M^n$ is a $<\kappa$-generated module for each $n \in \mathbb Z$. We are going to construct a transfinite sequence in $\mathbf C(\Proj)$, $(E_\alpha,u_{\alpha\beta}\mid \alpha < \beta < \kappa\}$, such that $E_0=Y$, $u_{\alpha\beta}$ is a semi-split monomorphism, $\underline u_{\alpha\beta}\in \Mor_{\mathcal K}$ and 
\begin{enumerate}
\item[(\dag)] for every $\underline f:\underline M \rightarrow \underline E_\alpha$, there exists a $\underline g:\underline N \rightarrow \underline E_{\alpha+1}$ with $\underline u_{\alpha\alpha+1}\underline f=\underline g \underline m$.
\end{enumerate}

We make the construction recursively on $\beta$. If $\beta =0$, set $E_0=Y$. Suppose that we have made the construction for some $\beta < \kappa$ and set $I_\beta=\Hom_{\mathbf C(\Proj)}(M,E_\beta)$, $m_\beta:M^{(I_\beta)} \rightarrow N^{(I_\beta)}$ the direct sum of copies of $m$, and $v_\beta:M^{(I_\beta)} \rightarrow E_\beta$ the canonical morphism.  Making the pushout of $m_\beta$ and $v_\beta$ in $\mathbf C(\Modr R)$ we get the commutative diagram
\begin{equation}\label{e:Diagram}\tag{\dag\dag}
\begin{tikzcd}
M^{(I_\beta)} \arrow{r}{m_\beta} \arrow{d}{v_\beta} & N^{(I_\beta)} \arrow{d}{w}\\
E_\beta \arrow{r}{u} & W
\end{tikzcd}
\end{equation}
Since the pushout in $\mathbf C(\Modr R)$ is computed degree-wise, $u$ is a semi-split monomorphism and, in particular, $W \in \mathbf C(\Proj)$. Now, using that $\mathbf C(\Proj)$ is an exact category with the class of all semi-split short exact sequences, we can apply \cite[Proposition 2.11]{Buhler} to conclude that (\ref{e:Diagram}) is a pullback as well and, in particular, the induced sequence,
\begin{displaymath}
\begin{tikzcd}
0 \arrow{r} & M^{(I_\beta)} \arrow{r} & E_\beta \oplus N^{(I_\beta)} \arrow{r} & W \arrow{r} & 0
\end{tikzcd}
\end{displaymath}
is semi-split. Then, it induces an exact triangle in $\mathbf K(\Proj)$,
\begin{displaymath}
\begin{tikzcd}
M^{(I_\beta)} \arrow{r} & E_\beta \oplus N^{(I_\beta)} \arrow{r} & W \arrow{r} & M^{(I_\beta)}[-1].
\end{tikzcd}
\end{displaymath}
This means that (\ref{e:Diagram}) induces a homotopy pushout in $\mathbf K(\Proj)$ and, since $\underline m_\beta \in \Mor_{\mathcal K}$, $\underline u \in \Mor_{\mathcal K}$ as well by \cite[Lemma 1.5.8]{Neeman01}. Then set $u_{\beta\beta+1}=u$ and $E_{\beta\beta+1}=W$. From the commutativity of (\ref{e:Diagram}) and the definition of $v_\beta$ and $m_\beta$, it is easy to see that $\underline u_{\beta\beta+1}$ satisfies (\dag).

If $\beta$ is a limit ordinal, we compute the direct limit $(T,t_\alpha\mid \alpha < \beta)$ of $(E_{\alpha},u_{\alpha\gamma}\mid \alpha < \gamma < \beta)$ in $\mathbf C(\Modr R)$. Since $u_{\alpha\alpha+1}$ is a semi-split monomorphism, $(E_{\alpha}^n,u_{\alpha\gamma}^n \mid \alpha < \gamma < \beta)$ is a $\Proj$-filtration for every $n \in \mathbb Z$, so that $T \in \mathbf C(\Proj)$ by Kaplansy's theorem. Moreover, by Lemma \ref{l:TransfiniteSplitMono}, $t_\alpha$ is a semi-split monomorphisms for each $\alpha < \beta$ and $\underline t_\alpha$ belongs to $\Mor_{\mathcal K}$ by Lemma \ref{l:TransfiniteCompositionMor}. Then set $E_\beta=T$ and $u_{\alpha\beta}=t_\alpha$ for each $\alpha < \beta$. This finishes the construction.

Now, let $u:Y \rightarrow E$ be the transfinite composition in $\mathbf C(\Modr R)$ of the constructed transfinite sequence. By the same arguments as above, $E \in \mathbf C(\Proj)$ and $\underline u \in \Mor_{\mathcal K}$. It remains to see that $\underline E$ is $\Mor_{\mathcal K}$-injective. By the comments at the beginning of this proof, it is enough to prove that $\underline E$ is $\underline m$-injective. Denote by $u_\alpha:E_\alpha \rightarrow E$ the canonical morphism associated to the direct limit for each $\alpha < \kappa$ and take $\underline f:\underline M \rightarrow \underline E$ a morphism in $\mathbf K(\Proj)$. Since $M^n$ is $<\kappa$-generated for each $n \in \mathbb Z$ and $\kappa$ is uncountable regular, we can find an $\alpha < \kappa$ and a morphism $f_\alpha:M \rightarrow E_\alpha$ in $\mathbf C(\Proj)$ with $u_\alpha f_\alpha = f$. Applying (\dag), we can find $g:M \rightarrow E_{\alpha+1}$ such that $\underline u_{\alpha\alpha+1}\underline f_\alpha=\underline g\underline m$. It follows that $\underline f=\underline u_{\alpha+1}\underline g \underline m$, which implies that $\underline f$ has an extension and that $\underline E$ is $\underline m$-injective.

Finally, (1), (2) and (3) follow from Lemma \ref{l:EnoughInjectivesImpliesSmal}, Corollaries \ref{c:GorensteinPrecovers} and \ref{c:TotallyAcyclicPrecovers}.
\end{proof}

\bigskip

{\bf Acknowledgement.} The author thanks Peter J\o rgensen and Sergio Estrada for several stimulating conversations about the topic of the article.

\bibliographystyle{acm} \bibliography{/home/manolo/mizurdia@ual.es/Algebra/ReferenciasBibliograficas/references}

\end{document}